\newcommand{\Z}{\mathbb{Z}}
\newcommand{\C}{\mathbb{C}}
\newcommand{\fdg}{\colon}
\newcommand{\eps}{\varepsilon}
\newcommand{\bb}{,\ldots ,}
\newcommand{\caselI}[1]{\medskip\noindent\textit{#1}}
\newcommand{\caselII}[1]{\smallskip\textit{#1}}
\newtheorem*{theorem*}{Theorem}
\newtheorem{thm}{Theorem}
\newtheorem{lem}{Lemma}
\newtheorem{cor}{Corollary}
\newtheorem*{claim*}{Claim}
	\newcounter{countknownthm}
\newtheorem{knownthm}[countknownthm]{Theorem}
\theoremstyle{definition}
	\newtheorem{defi}{Definition}
	\newtheorem{problem}{Problem}
	\newtheorem{question}{Question}
	\newtheorem{rem}{Remark}
\begin{document}
\title[Consecutive tuples of multiplicatively dependent integers]{Consecutive tuples of multiplicatively dependent integers}
\subjclass[2020]{11N25, 11D61, 11J86} 
\keywords{Multiplicative dependence, multiplicative independence, Pillai’s problem, linear forms in logarithms, $S$-unit equations}
\thanks{The authors were supported by the Austrian Science Fund (FWF) under the project I4406.}

\author[I. Vukusic]{Ingrid Vukusic}
\address{I. Vukusic,
University of Salzburg,
Hellbrunnerstrasse 34/I,
A-5020 Salzburg, Austria}
\email{ingrid.vukusic\char'100stud.sbg.ac.at}

\author[V. Ziegler]{Volker Ziegler}
\address{V. Ziegler,
University of Salzburg,
Hellbrunnerstrasse 34/I,
A-5020 Salzburg, Austria}
\email{volker.ziegler\char'100sbg.ac.at}

\begin{abstract}
This paper is concerned with the existence of consecutive pairs and consecutive triples of multiplicatively dependent integers.
A theorem by LeVeque on Pillai's equation implies that the only consecutive pairs of multiplicatively dependent integers larger than 1 are $(2,8)$ and $(3,9)$. 
For triples, we prove the following theorem: If $a\notin \{2,8\}$ is a fixed integer larger than 1, then there are only finitely many triples $(a,b,c)$ of pairwise distinct integers larger than 1 such that $(a,b,c)$, $(a+1,b+1,c+1)$ and $(a+2,b+2,c+2)$ are each multiplicatively dependent. Moreover, these triples can be determined effectively.
\end{abstract}

\maketitle

\section{Introduction}

We call an $n$-tuple $(z_1\bb z_n)\in \C^n$ \textit{multiplicatively dependent} if there exists a non-zero integer vector $(k_1\bb k_n)\in \Z^n$ such that 
\[
z_1^{k_1}\cdots z_n^{k_n}=1.
\]
Otherwise, we call the $n$-tuple \textit{multiplicatively independent}. 

A few years ago, Dubickas and Sha \cite{DubickasSha2018} studied multiplicative dependence of translations of algebraic numbers. They proved that given $n$ pairwise distinct algebraic numbers $(\alpha_1\bb\alpha_n)$, the translation $(\alpha_1 + t \bb \alpha_n + t)$ is multiplicatively independent if $t$ is a sufficiently large integer or a sufficiently large 	algebraic integer of sufficiently small algebraic degree.

In particular, their result implies that if $(a_1\bb a_n)$ is a multiplicatively dependent $n$-tuple of integers, then $(a_1+t \bb a_n+t)$ is multiplicatively independent, if $t$ is a sufficiently large integer.
In this paper, we ask whether this is true for small $t$, e.g. for $t=1$ (in which case we speak of consecutive $n$-tuples). We will focus on the cases $n=2$ and $n=3$, i.e. on pairs and triples.

For $n=2$ it turns out that there are in fact no consecutive pairs of multiplicatively dependent integers larger than 1 apart from $(2,8)$ and $(3,9)$. This follows quite easily from existing results \cite{LeVeque1952,Mihailescu2004} on Pillai's equation. Even stronger results for integer pairs follow from two theorems by Bennett \cite{Bennett2001,Bennett2008}, see Section \ref{sec:pairs}.

For $n=3$ there are infinitely many triples such that both $(a,b,c)$ and $(a+1,b+1,c+1)$ are multiplicatively dependent. In Section \ref{sec:triples} we characterise some of these triples and then ask the question if there are infinitely many triples such that $(a,b,c)$, $(a+1,b+1,c+1)$ and $(a+2,b+2,c+2)$ are each multiplicatively dependent. We are not able to answer this question, but we prove the following result: If $a\notin \{2,8\}$ is a fixed integer larger than 1, then there are only finitely many triples $(a,b,c)$ of pairwise distinct integers larger than 1 such that $(a,b,c)$, $(a+1,b+1,c+1)$ and $(a+2,b+2,c+2)$ are each multiplicatively dependent. This is the main theorem of this paper and Section \ref{sec:proof-allg} is devoted to its proof. The proof is based on linear forms in logarithms.

Throughout the paper we will only consider integers larger than 1, because all $n$-tuples containing 1 are multiplicatively dependent and an $n$-tuple $(a_1\bb a_n)$ is multiplicatively dependent if and only if $(|a_1|\bb |a_n|)$ is.

\section{Consecutive pairs of multiplicatively dependent integers}\label{sec:pairs}

As mentioned in the introduction, there exists exactly one pair $(a,b)$ of distinct integers larger than 1 such that $(a,b)$ and $(a+1,b+1)$ are both multiplicatively dependent, namely the pair $(2,8)$.
We will see that this follows easily from Mih\u{a}ilescu's solution of the Catalan equation~\cite{Mihailescu2004}. In fact, our statement is equivalent to a weaker result on the Catalan equation due to LeVeque \cite{LeVeque1952}. Moreover, we will use a stronger result by Bennett \cite{Bennett2008} in order to show that if $(a,b)$ is a pair of multiplicatively dependent sufficiently large distinct integers, then there are in fact no other multiplicatively dependent pairs ``in its vicinity''.

\medskip

We start by noting a useful fact about pairs of multiplicatively dependent integers: A pair of positive integers $(a,b)$ is multiplicatively dependent if and only if $a$ and $b$ are of the form $a=q^x$, $b=q^y$ for some positive integer $q$ and nonnegative integers $x,y$.

Now suppose that $1<a<b$ are integers such that $(a,b)$ and $(a+1,b+1)$ are both multiplicatively dependent. Then we can write
\begin{align}\label{eq:two}
	a&=c^x,  & b&=c^y, \\
	a+1&=d^s,  & b+1&=d^t,\nonumber
\end{align} 
with integers $c,d,x,y,s,t$. Since $1<a<b$,
we have $c>1$, $d>1$ and $y>1, t>1$. Now note that $d^t-c^y=1$ and from Mih\u{a}ilescu's solution of the Catalan equation \cite{Mihailescu2004} we get that $c^y=8$ and $d^t=9$, which implies $(a,b)=(2,8)$. 

Thus we have proven the following theorem.

\begin{thm}\label{thm:pair}
The only pair of integers $(a,b)$ with $1<a<b$ such that $(a,b)$ and $(a+1,b+1)$ are both multiplicatively dependent is $(2,8)$. 
\end{thm}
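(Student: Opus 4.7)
The plan is to exploit the elementary fact, noted in the text, that two integers $a, b > 1$ are multiplicatively dependent if and only if they are both powers of a common integer base $q \geq 2$. Given the hypothesis, I would write
$a = c^x$, $b = c^y$ with $c \geq 2$ and $1 \leq x < y$, and similarly $a+1 = d^s$, $b+1 = d^t$ with $d \geq 2$ and $1 \leq s < t$. In particular $y \geq 2$ and $t \geq 2$, because $b > a > 1$ and $b+1 > a+1 > 2$.

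Subtracting the expressions for $b$ and $b+1$ yields
$d^t - c^y = 1$, a Catalan-type equation with all four quantities $c, d, y, t \geq 2$. Mih\u{a}ilescu's theorem then forces $c^y = 8$ and $d^t = 9$, so $b = 8$, $c = 2$, and $a \in \{2, 4\}$.

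It then only remains to test the two candidates against the second condition. For $a = 2$ we obtain $(a,b) = (2,8)$ and $(a+1, b+1) = (3, 9)$, which is indeed multiplicatively dependent. For $a = 4$ the translated pair is $(5, 9)$; since $\gcd(5, 9) = 1$ and neither coordinate equals $1$, this pair cannot be multiplicatively dependent, ruling out $a = 4$.

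The only substantial ingredient in this approach is Mih\u{a}ilescu's theorem; the rest is bookkeeping. As the paper remarks, one does not even need the full strength of the Catalan conjecture, since LeVeque's earlier result on $d^t - c^y = 1$ already covers the reduced case in which both exponents are $\geq 2$. The principal obstacle, were one not allowed to cite Mih\u{a}ilescu or LeVeque, would be precisely the exponential Diophantine equation $d^t - c^y = 1$, which is genuinely deep; but conditional on those theorems, the argument is short and essentially a one-line reduction.
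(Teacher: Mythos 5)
Your proof is correct and takes essentially the same route as the paper: decompose both pairs as powers of common bases, extract the Catalan-type equation $d^t - c^y = 1$ with all four quantities at least $2$, and invoke Mih\u{a}ilescu (or LeVeque) to pin down $b=8$, $b+1=9$. You are in fact slightly more careful than the paper's terse conclusion ``which implies $(a,b)=(2,8)$,'' since you explicitly note that knowing $c=2$ and $b=8$ alone still leaves $a\in\{2,4\}$ and then rule out $a=4$ by checking that $(5,9)$ is multiplicatively independent.
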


As mentioned before, Theorem \ref{thm:pair} is actually equivalent to a theorem by LeVeque \cite{LeVeque1952}.

\begin{knownthm}[LeVeque]\label{thm:leveque}
For fixed integers $d>1$, $c>1$ the equation
\begin{equation}\label{eq:leVeque}
	d^n - c^m =1
\end{equation}
has at most one integer solution $(n,m)$ except for the case $(d,c)=(3,2)$ in which it has exactly two solutions, namely $(n,m)=(1,1)$ and $(n,m)=(2,3)$.
\end{knownthm}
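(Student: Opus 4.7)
The plan is to deduce LeVeque's theorem directly from Theorem~\ref{thm:pair}, confirming the equivalence asserted in the preceding paragraph. The key observation is that two distinct solutions of $d^n-c^m=1$ with the same bases produce two multiplicatively dependent integers whose $+1$-shifts are also multiplicatively dependent --- precisely the configuration classified by Theorem~\ref{thm:pair}.

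Concretely, I would suppose that $(n_1,m_1)\neq(n_2,m_2)$ are two positive integer solutions of \eqref{eq:leVeque}. Since $n\mapsto d^n$ and $m\mapsto c^m+1$ are strictly increasing, I may relabel so that $m_1<m_2$ and $n_1<n_2$. Setting $a:=c^{m_1}$ and $b:=c^{m_2}$, I then have $2\le a<b$ with $(a,b)$ multiplicatively dependent (both entries are powers of $c$) and $(a+1,b+1)=(d^{n_1},d^{n_2})$ multiplicatively dependent (both entries are powers of $d$). Theorem~\ref{thm:pair} then forces $(a,b)=(2,8)$, from which one reads off $c=2$, $m_1=1$, $m_2=3$, and then $d=3$, $n_1=1$, $n_2=2$. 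Hence for every base pair $(d,c)\neq(3,2)$ at most one solution can exist.

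For the exceptional pair $(d,c)=(3,2)$, the two listed solutions $(1,1)$ and $(2,3)$ are confirmed by direct substitution. A hypothetical third solution $(n_3,m_3)$, paired with either of them via the construction above, would again have to yield $(a,b)=(2,8)$, thereby pinning $m_3\in\{1,3\}$ and contradicting distinctness together with the monotonicity noted at the start.

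The proof itself presents no genuine obstacle: the entire substantive content is absorbed into the earlier appeal to Mih\u{a}ilescu's theorem used to prove Theorem~\ref{thm:pair}. The step that would have been hard --- excluding further solutions of $d^n-c^m=1$ by direct means, as LeVeque himself had to do in 1952 --- is rendered trivial here by Mih\u{a}ilescu's result, so the task reduces to a clean repackaging of Theorem~\ref{thm:pair}.
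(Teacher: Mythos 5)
Your proposal is correct and takes essentially the same route as the paper: in the paragraph immediately following the statement, the paper derives LeVeque's theorem from Theorem~\ref{thm:pair} by setting $a=c^{x}$, $b=c^{y}$ for two distinct solutions and invoking that theorem, exactly as you do. Your extra remarks (the monotonicity argument fixing the ordering, and the explicit exclusion of a hypothetical third solution for $(d,c)=(3,2)$) merely make explicit a couple of small points that the paper leaves implicit.
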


We check the equivalence of Theorem \ref{thm:pair} and Theorem \ref{thm:leveque}. 

First, assume that Theorem \ref{thm:leveque} is true. Suppose that $1<a<b$ are integers such that $(a,b)$ and $(a+1,b+1)$ are both multiplicatively dependent. Then from \eqref{eq:two} we get that $d^s-c^x=1$ and $d^t-c^y=1$, i.e. $(s,x)$ and $(t,y)$ are two distinct solutions of \eqref{eq:leVeque}. By Theorem \ref{thm:leveque} this is only possible if $(c,d)=(3,2)$, $(s,x)=(1,1)$ and $(t,y)=(2,3)$. This corresponds to the case $(a,b)=(2,8)$.

Now assume that Theorem \ref{thm:pair} is true. Suppose that the equation $d^n - c^m=1$ has two distinct solutions $(n,m)=(s,x)$ and $(n,m)=(t,y)$ with $x<y$. Then we can set $a=c^x$ and $b=c^y$ and we get that $(a,b)=(c^x,c^y)$ and $(a+1,b+1)=(d^s,d^t)$ are both multiplicatively dependent. By Theorem \ref{thm:pair} this means that $c^x=2$, $c^y=8$, $d^s=3$ and $d^t=9$, which implies $(d,c)=(3,2)$, $(s,x)=(1,1)$ and $(t,y)=(2,3)$.

\medskip
Next, we consider a stronger result by Bennett \cite{Bennett2008}.

\begin{knownthm}[Bennett, 2008]\label{thm:bennett}
For fixed positive integers $c,d$ the inequality
\[
	0 < \left|d^n - c^m \right| 
	< \frac{1}{4} \max \{ 	d^{n/2}, c^{m/2} \}
\] 
has at most one solution $(n,m)$ in positive integers.
\end{knownthm}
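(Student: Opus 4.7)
The theorem is a deep known result of Bennett, so a realistic sketch must essentially follow his hypergeometric strategy. I would argue by contradiction, assuming two distinct positive-integer solutions $(n_1,m_1)$ and $(n_2,m_2)$ with, say, $n_1\le n_2$. The first step is a \emph{gap principle}: after relabelling so that $d^{n_i} \ge c^{m_i}$, the hypothesis gives $0 < d^{n_i} - c^{m_i} < \tfrac14 d^{n_i/2}$, and taking logarithms yields tiny linear forms
\[
0 < n_i \log d - m_i \log c < \tfrac{1}{3}\, d^{-n_i/2}
\]
for $i=1,2$. Subtracting the two produces
\[
\bigl|(n_2-n_1)\log d - (m_2-m_1)\log c\bigr| < \tfrac{1}{3}\, d^{-n_1/2},
\]
so $(n_2-n_1)/(m_2-m_1)$ is an exceptionally good rational approximation to $\log c/\log d$. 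Assuming $c,d$ are multiplicatively independent (the dependent case reduces to a one-variable exponential equation handled easily, essentially via Theorem~\ref{thm:leveque}), the integers $n_2-n_1$ and $m_2-m_1$ must be large and themselves satisfy a secondary gap estimate.

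The crux is producing a matching \emph{lower} bound for the linear form above. Here I would follow Bennett and employ the hypergeometric method. Writing $d^{n_i} = c^{m_i}(1+\eta_i)$ with $|\eta_i|$ very small, one constructs explicit Padé approximants $[P_k(z),Q_k(z)]$ to $(1-z)^{1/N}$ for a carefully chosen $N$, evaluates them at $z=\eta_i$ (or a suitably normalised variant), and exploits divisibility by large binomial coefficients in the denominators to obtain an effective irrationality measure for $\sqrt[N]{c/d}$. This translates to a lower bound of the shape
\[
|n \log d - m \log c| \gg \max(n,m)^{-\kappa}
\]
with $\kappa$ strictly smaller than what Baker's theory alone provides. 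For sufficiently large $n_1$, this beats the upper bound from the gap principle, and the remaining small cases are eliminated by a direct search bounded by the numerical constants of the hypergeometric estimate.

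The main obstacle I anticipate is the hypergeometric construction itself: one must choose $N$ and the shift parameters so that the polynomials $P_k, Q_k$ and the error $P_k(z) - (1-z)^{1/N}Q_k(z)$ all have simultaneously controllable archimedean size, while extracting the requisite $p$-adic cancellation from the denominators. A cruder substitute using only Baker-type linear-form-in-logarithms bounds would yield mere finiteness of solutions (much as in Theorem~\ref{thm:leveque}), but not the razor-sharp \emph{at most one} in the conclusion. Hence the hypergeometric route, although technical, appears essentially unavoidable if one is to recover Bennett's full statement.
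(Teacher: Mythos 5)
The paper does not prove this statement: it is presented as a \texttt{knownthm} and simply cited from Bennett~\cite{Bennett2008}, so there is no in-paper proof against which to compare your sketch. Everything in Section~\ref{sec:pairs} treats Theorem~\ref{thm:bennett} as a black box (it is used only once, to deduce Theorem~\ref{thm:pair-vicinity} by a short two-line argument).

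That said, a few remarks on the sketch itself. The gap-principle half is sound: from $0 < |d^{n_i}-c^{m_i}| < \tfrac14\max\{d^{n_i/2},c^{m_i/2}\}$ one does obtain linear forms $|n_i\log d - m_i\log c|$ that are exponentially small in $n_i$, and two such good approximations force either $n_1m_2=n_2m_1$ (disposed of by multiplicative independence and a short direct estimate) or a very large jump in denominator. Where the sketch goes astray is the choice of lower-bound technology. Bennett's hypergeometric/Pad\'e method produces effective irrationality measures for fixed algebraic numbers such as $(a/b)^{1/N}$, which is the right tool for inequalities with a \emph{fixed} exponent, e.g.\ $|ax^N-by^N|\le k$. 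Here both exponents $n$ and $m$ vary, so there is no single $N$ to build the Pad\'e approximants for, and the promised lower bound of the form $|n\log d-m\log c|\gg\max(n,m)^{-\kappa}$ is not what that method delivers. For Pillai-type equations with two free exponents, Bennett's actual mechanism (as in the companion result~\cite{Bennett2001}, stated as Theorem~\ref{thm:bennett2001} in the paper) is a sharp, fully explicit lower bound for linear forms in \emph{two} logarithms in the style of Laurent--Mignotte--Nesterenko, combined with the gap principle and a finite computation for the small cases. Contrary to your closing remark, this two-logarithm route does yield the ``at most one solution'' conclusion; it is precisely the combination of the numerically explicit constants in those bounds with a strong enough gap principle that closes the argument, not the hypergeometric method.
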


This result implies the following theorem.

\begin{thm}\label{thm:pair-vicinity}
Let $(a,b)$ be a pair of multiplicatively dependent integers with $1<a<b$. Then for any integers $\eps$, $\delta$ with $0<|\eps|< \frac{1}{4} \sqrt{a}$ and $0<|\delta|<\frac{1}{4} \sqrt{b}$ the pair $(a+\eps, b+\delta)$ is multiplicatively independent.
\end{thm}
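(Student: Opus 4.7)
The plan is to assume $(a+\eps, b+\delta)$ is multiplicatively dependent and derive a contradiction by producing two distinct positive integer solutions to the exponential inequality of Bennett's Theorem~\ref{thm:bennett}.

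First, I would invoke the characterization recalled in Section~\ref{sec:pairs}: a pair of positive integers greater than $1$ is multiplicatively dependent if and only if both entries are powers of a common integer. Applied to $(a,b)$, this gives an integer $c>1$ and positive integers $1 \le x < y$ with $a = c^x$ and $b = c^y$. Before applying the same characterization to $(a+\eps, b+\delta)$, I would briefly verify that both coordinates are integers strictly greater than $1$: since $|\eps| < \sqrt{a}/4$ and $\sqrt{a}/4 < a-1$ whenever $a \ge 2$, we have $a+\eps \ge 2$, and similarly $b+\delta \ge 2$. Hence there are an integer $d>1$ and positive integers $m,n$ with $a+\eps = d^m$ and $b+\delta = d^n$.

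Next, I would rewrite the situation as two simultaneous exponential equations with the same fixed base pair $(d,c)$:
\[
d^m - c^x = \eps, \qquad d^n - c^y = \delta,
\]
both nonzero by hypothesis on $\eps,\delta$. The bound on $\eps$ gives
\[
0 < |d^m - c^x| = |\eps| < \tfrac{1}{4}\sqrt{a} = \tfrac{1}{4} c^{x/2} \le \tfrac{1}{4}\max\{d^{m/2}, c^{x/2}\},
\]
and the analogous computation for $\delta$ shows both $(m,x)$ and $(n,y)$ are positive integer solutions to the inequality in Theorem~\ref{thm:bennett} for the fixed base pair $(d,c)$. That theorem permits at most one such solution, so $(m,x) = (n,y)$, contradicting $x < y$.

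There is no real obstacle here; the whole proof is a two-fold application of Bennett's result once the standard power structure of multiplicatively dependent pairs is in place. The only mildly delicate point is ensuring that $a+\eps$ and $b+\delta$ genuinely lie in the range $\ge 2$ (so that the powers-of-a-single-integer description applies with positive exponents $m,n$), but the inequalities $|\eps| < \sqrt{a}/4$ and $|\delta| < \sqrt{b}/4$ make this immediate.
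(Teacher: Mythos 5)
Your proof is correct and follows essentially the same route as the paper: write both pairs as powers of common bases, feed the resulting two exponential equations into Bennett's Theorem~\ref{thm:bennett}, and derive a contradiction. The only addition is your explicit check that $a+\eps, b+\delta \ge 2$ so the exponents $m,n$ are genuinely positive, which is a small detail the paper passes over silently.
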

\begin{proof}
Let $a$, $b$, $\eps$, $\delta$ be as in the theorem and suppose that  $(a,b)$ and $(a+\eps,b+\delta)$ are both multiplicatively dependent. Then we can write
\begin{align*}
	a&=c^x,  & b&=c^y, \\
	a+\eps&=d^s,  & b+\delta&=d^t,\nonumber
\end{align*} 
with positive integers $c,d,x,y,s,t$. This implies
\begin{align*}
	0 &< |\eps| 
		= |d^s - c^x | 
		< \frac{1}{4}\sqrt{a} 
		= \frac{1}{4} c^{x/2}
		\leq \frac{1}{4}\max\{d^{s/2},c^{x/2}\}
		\quad \text{and}\\
	0 &< |\delta| 
		= |d^t - c^y | 
		< \frac{1}{4}\sqrt{b} 
		= \frac{1}{4} c^{y/2}
		\leq \frac{1}{4}\max\{d^{t/2},c^{y/2}\},
\end{align*}
i.e. the inequality $0 < |d^n - c^m| < \frac{1}{4}\max\{d^{n/2},c^{s/2}\}$ has two distinct solutions $(n,m)=(s,x)$ and $(n,m)=(t,y)$, which is a contradiction to Theorem \ref{thm:bennett}.
\end{proof}

In particular, Theorem \ref{thm:pair-vicinity} implies the following.

\begin{cor}
Let $t$ be a fixed non-zero integer. Then there exist no integers $a,b$ with $16t^2 < a <b$ such that both $(a,b)$ and $(a+t,b+t)$ are multiplicatively dependent.
\end{cor}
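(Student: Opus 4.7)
The plan is to derive this immediately from Theorem \ref{thm:pair-vicinity} by specialising $\eps=\delta=t$. So suppose for contradiction that $a,b$ are integers with $16t^2<a<b$ such that both $(a,b)$ and $(a+t,b+t)$ are multiplicatively dependent. I would set $\eps=\delta=t$ and verify the two hypotheses of Theorem \ref{thm:pair-vicinity}.

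First, $t\neq 0$ gives $|\eps|=|\delta|=|t|>0$, handling the lower bounds. For the upper bounds, the assumption $16t^2<a$ rearranges to $|t|<\tfrac{1}{4}\sqrt{a}$, i.e.\ $|\eps|<\tfrac{1}{4}\sqrt{a}$, and then $b>a>16t^2$ similarly gives $|\delta|=|t|<\tfrac{1}{4}\sqrt{b}$. Hence all hypotheses of Theorem \ref{thm:pair-vicinity} are satisfied, so $(a+t,b+t)=(a+\eps,b+\delta)$ must be multiplicatively independent, contradicting our assumption.

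There is essentially no obstacle here: the constant $16t^2$ in the corollary was chosen precisely so that the condition $a>16t^2$ matches $|t|<\tfrac{1}{4}\sqrt{a}$, and since $b>a$ the condition for $\delta$ comes for free. The only thing to be careful about is that $t$ is allowed to be negative, which is why the absolute value bars in Theorem \ref{thm:pair-vicinity} (and the squaring in $16t^2$) are needed; no further case analysis is required.
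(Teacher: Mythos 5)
Your proof is correct and is exactly the intended deduction: the paper states the corollary as an immediate consequence of Theorem \ref{thm:pair-vicinity} (``In particular, Theorem \ref{thm:pair-vicinity} implies the following'') without spelling out the specialisation $\eps=\delta=t$, which is precisely what you have done.
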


This means, for instance, that there are no integers $64<a<b$ such that $(a,b)$ and $(a+2,b+2)$ are both multiplicatively dependent. However, we would also like to know if there are any such pairs even if $a$ is allowed to be small. 

We can obtain this from an earlier result by Bennett \cite[Theorem 5.1]{Bennett2001}, which is a generalisation of LeVeque's theorem, where the number 1 in equation \eqref{eq:leVeque} is replaced by any integer between 2 and 100.

\begin{knownthm}[Bennett, 2001]\label{thm:bennett2001}
For fixed integers $c\geq 2,d\geq 2$ and $1\leq t \leq 100$ the equation
\begin{equation}\label{eq:bennet2001}
	d^n - c^m =t
\end{equation} 
has at most one solution $(n,m)$ in positive integers, unless
\begin{align*}
	(d,c,t)\in \{
	&(3,2,1),(2,3,5),
	(2,3,13), (4,3,13),
	(16,3,13), \\
	&(2,5,3),
	(13,3,10), (91,2,89),
	(6,2,4), (15,6,9) \}.
\end{align*}
In each of these cases equation \eqref{eq:bennet2001} has precisely two solutions.
\end{knownthm}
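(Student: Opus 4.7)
The plan is to reduce the statement to bounding the solutions of a linear form in two logarithms. Suppose $(n_1,m_1)$ and $(n_2,m_2)$ are two distinct positive integer solutions of \eqref{eq:bennet2001}; without loss of generality $n_1<n_2$, which forces $m_1<m_2$ as well. Subtracting the two equations yields the identity
\[
d^{n_1}\bigl(d^{n_2-n_1}-1\bigr)=c^{m_1}\bigl(c^{m_2-m_1}-1\bigr),
\]
which already imposes $p$-adic restrictions on $n_1,m_1$ for every prime dividing exactly one of $c,d$; in the special case $t=1$ this mechanism underlies LeVeque's argument.

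For the general case I would view each solution as producing a small linear form $\Lambda_i:=n_i\log d-m_i\log c$. From $|d^{n_i}-c^{m_i}|=t$ one gets the upper bound $|\Lambda_i|\le t/\min(d^{n_i},c^{m_i})$, while Matveev's theorem (or a sharp Laurent--Mignotte bound for two logarithms) supplies a lower bound of the shape $|\Lambda_i|\ge\exp(-C\log\max(n_i,m_i))$, with $C$ depending only on $c$ and $d$. Combining the two inequalities for $i=2$ yields an effective upper bound on $\max(n_2,m_2)$, and hence on all four exponents, in terms of $c$, $d$, and $t$.

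It then remains to reduce this huge Baker-type bound to a computationally feasible range, using a Baker--Davenport or LLL reduction applied to the continued fraction expansion of $\log d/\log c$, and afterwards to enumerate all $c,d\ge 2$ and $1\le t\le 100$ up to the reduced bound by computer; the ten listed triples $(d,c,t)$ would then emerge as the only configurations admitting a second solution. The main obstacle is precisely this reduction-and-enumeration step: the raw Matveev bound is astronomical, and making it small enough to sweep by computer calls for the sharper hypergeometric estimates of Thue--Siegel type that Bennett employs, which exploit the smallness of $t$ relative to $c^{m}$ and $d^{n}$ in an essential way.
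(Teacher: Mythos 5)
This statement is quoted as a \emph{known theorem} (Theorem~5.1 of Bennett \cite{Bennett2001}); the paper offers no proof of it and uses it as a black box, so there is no in-paper argument against which to measure your sketch. That said, your outline does point at the right toolkit: the factorised identity $d^{n_1}(d^{n_2-n_1}-1)=c^{m_1}(c^{m_2-m_1}-1)$ as the source of a gap principle, lower bounds for the two-logarithm form $\Lambda_i = n_i\log d - m_i\log c$, and the hypergeometric (Pad\'e) irrationality measures, all of which are indeed the ingredients Bennett combines.

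As a \emph{proof}, however, the sketch has a genuine gap, which you yourself flag. The ``reduction and enumeration'' step is not simply a Baker--Davenport or LLL reduction on $\log d/\log c$ for fixed $c,d$: the theorem quantifies over \emph{all} $c,d\ge 2$, so before anything can be enumerated one must bound $c$ and $d$ (equivalently, the size of the first solution) uniformly in terms of $t$ alone. That is exactly what the gap principle is for: a second solution forces $n_2,m_2$ to be so large relative to $n_1,m_1$ that the Laurent--Mignotte--Nesterenko two-logarithm bound then caps $n_1,m_1,c,d$ absolutely, and the hypergeometric estimates close out the residual small-height range where the Baker-type bound is too weak. Writing that ``the ten listed triples would then emerge'' without making the gap principle quantitative and without explaining how it bounds $c,d$ is assuming the conclusion. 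For the purposes of this paper you should simply cite Bennett's result rather than attempt to reprove it; if you do want to reprove it, the missing content is precisely the quantitative gap lemma and the Pad\'e-based irrationality measures, not merely an appeal to Matveev plus LLL.
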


Analogously to the equivalence of Theorem \ref{thm:leveque} and Theorem \ref{thm:pair}, Theorem \ref{thm:bennett2001} is equivalent to the following generalisation of Theorem \ref{thm:pair}.

\begin{thm}\label{thm:pair-allg}
Let $t$ be a fixed integer with $1\leq t \leq 100$ and $t\notin\{1,3,4,5,9,10,13,89\}$. Then there exist no integers $1<a<b$ such that $(a,b)$ and $(a+t,b+t)$ are both multiplicatively dependent. If $t\in \{1,3,4,5,9,10,13,89\}$, then there is exactly one pair $(a,b)$ such that $(a,b)$ and $(a+t,b+t)$ are both multiplicatively dependent.
\end{thm}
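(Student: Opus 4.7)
The plan is to mimic the equivalence argument already given for $t=1$ between Theorem~\ref{thm:pair} and Theorem~\ref{thm:leveque}: a pair $(a,b)$ with $1<a<b$ for which both $(a,b)$ and $(a+t,b+t)$ are multiplicatively dependent corresponds precisely to a triple $(d,c,t)$ for which the equation $d^n-c^m=t$ has at least two solutions in positive integers. The statement about the exceptional $t$ should then be read off from the list in Theorem~\ref{thm:bennett2001}.

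First, suppose $(a,b)$ and $(a+t,b+t)$ are both multiplicatively dependent with $1<a<b$. Using the characterisation recalled at the beginning of Section~\ref{sec:pairs}, I would write
\[
 a=c^{x},\quad b=c^{y},\quad a+t=d^{s},\quad b+t=d^{r},
\]
with $c,d>1$ and $1\le x<y$, $s,r\ge 1$. Subtracting then gives two positive integer solutions $(s,x)\ne(r,y)$ of $d^n-c^m=t$. By Theorem~\ref{thm:bennett2001}, either $1\le t\le 100$ and the triple $(d,c,t)$ appears in Bennett's exceptional list, or no such two solutions exist. In particular, for $t\in\{1,\dots,100\}\setminus\{1,3,4,5,9,10,13,89\}$ no such pair $(a,b)$ can exist, which is the first assertion.

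For the second assertion I would proceed case by case through the eight exceptional values of $t$. For each $(d,c,t)$ in Bennett's list I would compute the two solutions of $d^n-c^m=t$, read off the resulting exponents $(x,y)$, and set $(a,b)=(c^x,c^y)$. Carrying this out gives, for $t=1,3,4,5,9,10,13,89$, the pairs $(2,8)$, $(5,125)$, $(2,32)$, $(3,27)$, $(6,216)$, $(3,2187)$, $(3,243)$, $(2,8192)$ respectively, and in each case one checks directly that both $(a,b)$ and $(a+t,b+t)$ are indeed multiplicatively dependent. For the converse direction (uniqueness), the argument above shows that every such pair must arise in this way, so it suffices to observe that for each exceptional $t$ the corresponding pair $(a,b)$ is the same across all exceptional triples sharing that $t$.

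The only mildly delicate point, and the main thing to check carefully, is the value $t=13$, since Bennett's list contains three triples $(2,3,13)$, $(4,3,13)$, $(16,3,13)$ for that $t$. I would verify explicitly that all three produce the same exponents $x=1$, $y=5$ with $c=3$, hence the single pair $(a,b)=(3,243)$, so that uniqueness is preserved. Apart from this small bookkeeping check, the argument is entirely parallel to the $t=1$ case already spelled out in the paper.
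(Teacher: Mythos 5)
Your argument is correct and is precisely the equivalence the paper asserts but does not spell out — it mirrors the paper's detailed $t=1$ equivalence between Theorem~\ref{thm:leveque} and Theorem~\ref{thm:pair}, replacing LeVeque's theorem by Bennett's Theorem~\ref{thm:bennett2001}. The computed pairs are all accurate, and you correctly flag the one genuinely delicate point, that the three triples $(2,3,13)$, $(4,3,13)$, $(16,3,13)$ must all yield the same pair $(3,243)$ for uniqueness to hold at $t=13$.
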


Before we move on to multiplicatively dependent triples of integers, let us note that an interesting generalisation of this section's results would be to consider algebraic numbers. We pose the following two questions.

\begin{question}\label{quest:alg-pair}
Let $K$ be a fixed number field. Are there algebraic numbers $\alpha,\beta\in K$, with $1<\alpha<\beta$ such that $(\alpha,\beta)$ and $(\alpha+1,\beta+1)$ are both multiplicatively dependent?
\end{question}

\begin{question}\label{alg-pair-vicinity}
Let $K$ be a fixed number field and let $(\alpha,\beta)\in K^2$ be a pair of multiplicatively dependent algebraic numbers of sufficiently large height. Is it possible that $(\alpha + \eps,\beta + \delta)$ is multiplicatively dependent as well, if $\eps$ and $\delta$ are algebraic numbers of small height in $K$?
\end{question}

\section{Consecutive triples of multiplicatively dependent integers}\label{sec:triples}

In this section we try to generalise the results from Section \ref{sec:pairs} to triples. Therefore, our first question is: Are there triples 
of pairwise distinct integers $(a,b,c)$ larger than 1  such that both $(a,b,c)$ and $(a+1,b+1,c+1)$ are multiplicatively dependent? 
Of course, the answer is ``yes'' as all triples with $\{2,8\}\subset\{a,b,c\}$ have this property. But even apart from these examples there are infinitely many such triples. Take for instance triples of the form $(d^x, d^y,(d^x+1)^s - 1)$ with positive integers $d,x,y,s$. Obviously $(d^x, d^y,(d^x+1)^s - 1)$ is multiplicatively dependent because $d^x$ and $d^y$ are, and $(d^x + 1, d^y+1, (d^x+1)^s)$ is multiplicatively dependent because $d^x+1$ and $(d^x+1)^s$ are. This example motivates the following definition.

\begin{defi}\label{def:k-mult}
We call an $n$-tuple $(a_1\bb a_n)$ \textit{$k$-multiplicatively dependent} if there is a multiplicatively dependent $k$-subtuple $(a_{i_1}\bb a_{i_k})$ and any $(k-1)$-subtuple $(a_{j_1}\bb a_{j_{k-1}})$ is multiplicatively independent.
\end{defi}

From the examples above we know that there are infinitely many triples $(a,b,c)$ such that $(a,b,c)$ and $(a+1,b+1,c+1)$ are both $2$-multiplicatively dependent. 

In fact, it is quite easy to characterise all such triples.

\begin{thm}\label{thm:abcd}
Let $(a,b,c)$ be a triple of pairwise distinct integers larger than 1. Then $(a,b,c)$ and $(a+1,b+1,c+1)$ are both $2$-multiplicatively dependent if and only if $(a,b,c)$ is, up to permutation, of one of the following shapes:
\begin{enumerate}[label=(\alph*)]
	\item $(2,8,d)$, with $d\notin\{2,8\}$, \label{case:a}
	\item $(8, 2^x,3^y-1)$, with $x\notin\{1,3\}$, $y>2$, \label{case:b}
	\item $(d^x, d^y, (d^x+1)^s - 1)$, with $x\neq y$, $s>1$, \label{case:c}
	\item $(d^x-1,(d^x-1)^s, d^y-1)$, with $x\neq y$, $d^x>2$, $d^y>2$, $s>1$,\label{case:d}
\end{enumerate}
with positive integers $x,y,s$ and $d>1$.
\end{thm}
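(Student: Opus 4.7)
The proof has two directions. The sufficiency direction is routine: for each of the four shapes we exhibit an explicit multiplicatively dependent pair in $(a,b,c)$ and in $(a+1,b+1,c+1)$. In case~(a) these pairs are $\{2,8\}$ and $\{3,9\}$; in~(b), $\{8,2^x\}$ and $\{9,3^y\}$; in~(c), $\{d^x,d^y\}$ and $\{d^x+1,(d^x+1)^s\}$; in~(d), $\{d^x-1,(d^x-1)^s\}$ and $\{d^x,d^y\}$. Pairwise distinctness under the listed parameter constraints is immediate, except that in case~(b) the inequality $2^x\neq 3^y-1$ requires Mih\u{a}ilescu's theorem, using $y>2$ and $x\notin\{1,3\}$.

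For the necessity direction, let $(a,b,c)$ satisfy the hypothesis. After permuting the entries, we may assume $(a,b)$ is a multiplicatively dependent pair, and taking the largest common base, we write $a=D^X$, $b=D^Y$ with $D\geq 2$ and coprime positive integers $X\neq Y$. The triple $(a+1,b+1,c+1)$ also contains a multiplicatively dependent pair, which, after possibly swapping $a\leftrightarrow b$, is either $(a+1,b+1)$ (Case~I) or $(a+1,c+1)$ (Case~II). In Case~I, Theorem~\ref{thm:pair} applied to the pair $(a,b)$ and its translate immediately forces $\{a,b\}=\{2,8\}$, giving case~(a).

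In Case~II, write analogously $a+1=E^U$, $c+1=E^V$ with $E\geq 2$, $\gcd(U,V)=1$, and $U\neq V$. If $U=1$, then $c=(a+1)^V-1$ with $V\geq 2$, and the triple has the shape of case~(c). Otherwise the Catalan-type equation $D^X+1=E^U$ holds with $U\geq 2$, and Mih\u{a}ilescu's theorem gives a dichotomy. If $X=1$, the equation imposes no constraint, and setting $d=E$ yields $(a,b,c)=(d^U-1,(d^U-1)^Y,d^V-1)$, which fits case~(d) (with $d^V-1=E-1=c$ when $V=1$). If $X\geq 2$, Mih\u{a}ilescu forces $(D,X,E,U)=(2,3,3,2)$, so $a=8$; then either $V=1$ and $c=2$, yielding case~(a), or $U,V\geq 2$ and $c=3^V-1$ with $V\geq 3$ odd, yielding case~(a) for $Y=1$ and case~(b) for $Y\geq 2$, where the admissible range of $Y$ comes from $\gcd(3,Y)=1$ together with pairwise distinctness.

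The main obstacle is the careful interplay of the two primitivity reductions ($\gcd(X,Y)=1$ and $\gcd(U,V)=1$) with the Mih\u{a}ilescu dichotomy. In particular, the sub-case $X=1$ with $U,V\geq 2$ coprime demands that one correctly identify case~(d) with non-trivial parameters $x=U$, $y=V$, $s=Y$ and verify $x\neq y$, $s>1$, $d^x,d^y>2$ from the assumptions. A secondary subtlety is that the four shapes genuinely overlap: for instance case~(b) with even $y$ also fits case~(c), and case~(b) with $3\mid x$ is captured by case~(d) via the alternative primitive decomposition of $(8,2^x)$. Since the theorem only asserts that at least one of the shapes applies, these overlaps cause no trouble, but they explain why certain branches of the analysis output what looks like a proper sub-family of a given case.
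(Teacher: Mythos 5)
Your proof is correct and takes essentially the same route as the paper's: reduce (after Theorem~\ref{thm:pair} handles the $(a+1,b+1)$ sub-case) to $(a,b)$ and $(a+1,c+1)$ being the dependent pairs, write them as powers, and split on whether the relevant exponents of $a$ and $a+1$ are $1$ or both at least $2$, invoking Mih\u{a}ilescu in the latter case. The only deviation is that you insist on primitive (coprime-exponent) representations, which the paper does not bother with and which buys nothing here---it merely forces the overlap bookkeeping in your final paragraph---while on the sufficiency side no distinctness verification is needed at all, since pairwise distinctness is a hypothesis of the theorem rather than a consequence of the shape parameters.
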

\begin{proof}
It is easy to check that all triples $(a,b,c)$ of the above shapes have indeed the property that $(a,b,c)$ and $(a+1,b+1,c+1)$ are both $2$-multiplicatively dependent.

Now assume that $(a,b,c)$ is an integer triple of pairwise distinct integers larger than 1 such that $(a,b,c)$ and $(a+1,b+1,c+1)$ are both $2$-multiplicatively dependent. Without loss of generality assume that $(a,b)$ is multiplicatively dependent. If $(a+1,b+1)$ is multiplicatively dependent, then by Theorem \ref{thm:pair} we have $\{a,b\}=\{2,8\}$, which corresponds to case \ref{case:a}.
If $(a+1,b+1)$ is not multiplicatively dependent, then we may assume without loss of generality that $(a+1,c+1)$ is. Thus we can write
\begin{align*}
	a&=d^x, & b&=d^y, & c&=c,\\
	a+1 &= q^s, & b+1 &= b+1, & c+1 &=q^t,
\end{align*}
with positive integers $d>1,q>1$ and $x\neq y$, $s\neq t$.
In particular, we get that $q^s-d^x=1$. 

If $s \geq 2$ and $x\geq 2$, then by Mih\u{a}ilescu's theorem \cite{Mihailescu2004} we have $q^s=3^2$ and $d^x=2^3$, which implies $a=8$, $b=2^y$ and $c=3^t-1$, so we are in case \ref{case:b} or, if $y=1$, in case \ref{case:a}. 

If $s=1$, then we get that $c=q^t-1=(a+1)^t-1=(d^x+1)^t-1$, which corresponds to case \ref{case:c}.

If $x=1$, then we get that $b=d^y=a^y=(q^s-1)^y$. Since $a=q^s-1$ and $c=q^t-1$, this corresponds to case \ref{case:d}.
\end{proof}

The perhaps most natural next question would be: Are there any/infinitely many integer triples $(a,b,c)$ with $1<a<b<c$ such that $(a,b,c)$ and $(a+1,b+1,c+1)$ are both $3$-multiplicatively dependent? 
A brute force search reveals that such triples do indeed exist, the one with the smallest $c$ being $(9,49,63)$ (note that $(9,49,63)=(3^2, 7^2, 3^2\cdot 7)$ and $(10,50,64)=(2\cdot 5, 2\cdot 5^2,2^6)$). In the range of $2\leq a < b < c \leq 1000$ one can find 13 such triples, so there might be infinitely many, see Question \ref{quest:3-mult} in Section \ref{sec:openProblems}.

\medskip

However, there is another natural question to ask. In Section \ref{sec:pairs} we searched for consecutive multiplicatively 
dependent pairs of integers. Moving on to triples, perhaps we should consider three consecutive triples and ask whether there are any/infinitely many integer triples $(a,b,c)$ with $1<a<b<c$ such that $(a,b,c)$, $(a+1,b+1,c+1)$ and $(a+2,b+2,c+2)$ are each multiplicatively dependent. 

Again, such triples do exist and there are in fact infinitely many of them. For instance $(2,8,2^x 5^y-2)$, $(3,9,2^x 5^y-1)$ and $(4,10,2^x 5^y)$ are each multiplicatively dependent for any nonnegative integers $x$ and $y$. Excluding triples where both 2 and 8 appear, one can still find such triples, e.g. $(7,15,49)$ (note that $(7,15,49),(8,16,50)$ and $(9,17,51)$ are each multiplicatively dependent). In the range of $2\leq a < b < c \leq 1000$ there are in fact 11 such triples. The problem of deciding whether there are infinitely many of them seems to be  hard and we could not resolve it (see Question \ref{quest:3cons} in Section \ref{sec:openProblems}). However, if we simplify the problem by fixing certain integers we obtain

\begin{lem}\label{lem:zweiBeschreankt}
Let $1<a<b$ be fixed integers with $(a,b)\neq (2,8)$. Suppose that $c>1$, $c\notin\{a,b\}$ is an integer such that $(a,b,c)$, $(a+1,b+1,c+1)$, $(a+2,b+2,c+2)$ are each multiplicatively dependent. Then $c<C_0$, where $C_0=C_0(a,b)$ is an effectively computable bound depending on $a$ and $b$.
\end{lem}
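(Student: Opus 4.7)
The plan is to reduce the problem to a two-variable $S$-unit equation and then invoke Baker's theorem on linear forms in logarithms. The decisive preliminary observation is that, under the hypotheses $a\geq 2$ and $(a,b)\neq(2,8)$, at least two of the three pairs $(a+i,b+i)$ for $i\in\{0,1,2\}$ must be multiplicatively independent. Indeed, Theorem \ref{thm:pair} excludes simultaneous multiplicative dependence of $(a,b)$ and $(a+1,b+1)$; multiplicative dependence of $(a+1,b+1)$ on its own would force $\{a+1,b+1\}\in\{\{2,8\},\{3,9\}\}$, contradicting either $a\geq 2$ or $(a,b)\neq(2,8)$; and Theorem \ref{thm:pair-allg} applied with $t=2$ rules out $(a,b)$ and $(a+2,b+2)$ being simultaneously multiplicatively dependent.

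For every index $i\in\{0,1,2\}$ such that $(a+i,b+i)$ is multiplicatively independent, the multiplicative dependence of the triple $(a+i,b+i,c+i)$ forces a relation $(a+i)^{k}(b+i)^{\ell}(c+i)^{m}=1$ with $m\neq 0$, so comparing prime factorisations shows that every prime dividing the positive integer $c+i$ already divides $(a+i)(b+i)$. Let $S$ be the finite set of primes dividing $a(a+1)(a+2)b(b+1)(b+2)$; then $S$ depends only on $a$ and $b$, and by the previous step at least two of the integers $c,c+1,c+2$ are $S$-units. Picking two such, say $c+i$ and $c+j$ with $0\leq i<j\leq 2$, we obtain
\[
(c+j)-(c+i)=j-i\in\{1,2\},
\]
a two-variable $S$-unit equation with a fixed, non-zero right-hand side.

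By the classical effective theory of $S$-unit equations, which rests on Baker's bounds for linear forms in archimedean and $p$-adic logarithms, this equation has only finitely many solutions and they can be bounded effectively in terms of $S$, and hence in terms of $a$ and $b$. Concretely, one combines a Baker-type lower bound for $\bigl|\log((c+j)/(c+i))\bigr|$ in terms of the exponents of the primes in $S$ with the trivial upper bound $\bigl|\log((c+j)/(c+i))\bigr|\ll 1/c$; this yields an effective bound $c<C_0(a,b)$. I do not expect a genuinely hard step: the preliminary case analysis is a direct application of the theorems of Section \ref{sec:pairs}, and the $S$-unit equation step is routine. The most delicate point is ensuring that the case analysis is truly exhaustive—in particular, that the boundary configurations involving the pairs $(2,8)$ and $(3,9)$ are handled—and that the constant $C_0(a,b)$ is presented in an explicitly effective form.
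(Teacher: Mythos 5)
Your proof takes essentially the same route as the paper's: you use the theorems of Section~\ref{sec:pairs} to deduce that at most one of the three pairs $(a+i,b+i)$ can be multiplicatively dependent, observe that for two of the remaining indices $i\neq j$ the integers $c+i$ and $c+j$ are $S$-units for a set $S$ determined only by $a$ and $b$, and then invoke the effective theory of the two-variable $S$-unit equation $u-v=\eps_{ij}$ with $\eps_{ij}\in\{\pm1,\pm2\}$. The paper compresses the first step into a single appeal to Theorem~\ref{thm:pair-allg} and takes the slightly smaller set $S$ of primes dividing $(a+i)(b+i)(a+j)(b+j)$; neither difference is substantive.

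One clause in your preliminary case analysis is garbled and, read literally, false: ``multiplicative dependence of $(a+1,b+1)$ on its own would force $\{a+1,b+1\}\in\{\{2,8\},\{3,9\}\}$''. Multiplicative dependence of the single pair $(a+1,b+1)$ imposes no such constraint --- $(4,16)$, say, is multiplicatively dependent. What you need, and presumably intend, is the third case of the exhaustive analysis: if $(a+1,b+1)$ and $(a+2,b+2)$ were \emph{both} multiplicatively dependent, then Theorem~\ref{thm:pair} applied to the pair $(a+1,b+1)$ would force $(a+1,b+1)=(2,8)$, hence $a=1$, contradicting $a>1$; the hypothesis $(a,b)\neq(2,8)$ plays no role in this branch. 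With that clause rewritten the case analysis is exhaustive and the rest of your argument stands.
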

\begin{proof}
By Theorem \ref{thm:pair-allg} at most one of the pairs $(a,b),(a+1,b+1),(a+2,b+2)$ can be multiplicatively dependent. Assume that the pairs $(a+i,b+i)$ and $(a+j,b+j)$ with $i,j\in \{0,1,2\}$ are each not multiplicatively dependent. Then, since $(a+i,b+i,c+i)$ is multiplicatively dependent, $c+i$ can only have prime factors that also appear in $a+i$ and/or $b+i$. Similarly, $c+j$ can only have prime factors that also appear in $a+j$ and/or $b+j$. In other words, $u:= c+i$ and $v:=c+j$ are $S$-units, where $S$ is the set of all prime factors of $(a+i)(b+i)(a+j)(b+j)$. The $S$-units satisfy the $S$-unit equation $u-v=(c+i)-(c+j)=\eps_{ij}$, where $\eps_{ij}=i-j\in \{\pm 1, \pm 2\}$. But such an $S$-unit equation has only finitely many solutions and they can be determined effectively \cite{Gyoery:1979} (see also \cite{alvaradoEtAl2020} for a practical implementation).
\end{proof}

Next, we fix only one integer. Section \ref{sec:proof-allg} is devoted to the proof of the following theorem.

\begin{thm}\label{thm:allg}
Let $a>1$ be a fixed integer with $a \notin\{2,8\}$. Suppose that $b,c \neq a$ are distinct integers larger than 1 such that $(a,b,c)$, $(a+1,b+1,c+1)$ and $(a+2,b+2,c+2)$ are each multiplicatively dependent. Then $\max\{a,b,c\}<C$, where $C=C(a)$ is an effectively computable bound depending only on $a$.
\end{thm}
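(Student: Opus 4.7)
The plan is to perform a case analysis on the structure of the multiplicative dependences at each of the three shifts, and in each resulting branch to derive, via Baker's theorem on linear forms in logarithms, an effective upper bound on $\max\{b,c\}$ in terms of $a$.

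For each $i\in\{0,1,2\}$, I would fix a primitive exponent vector $(x_i,y_i,z_i)\in\Z^3\setminus\{(0,0,0)\}$ satisfying $(a+i)^{x_i}(b+i)^{y_i}(c+i)^{z_i}=1$; an elementary argument on matrices of $p$-adic valuations gives $\max\{|x_i|,|y_i|,|z_i|\}\leq C(\log\max\{a+i,b+i,c+i\})^2$. Call the shift $3$-\emph{multiplicatively dependent} if all three exponents are non-zero, and $2$-\emph{multiplicatively dependent} otherwise; in the latter case a vanishing exponent singles out a dependent pair in $\{(a+i,b+i),(a+i,c+i),(b+i,c+i)\}$. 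Applying Theorem~\ref{thm:pair-allg} (with $t\in\{1,2\}$) together with the hypotheses $a\notin\{2,8\}$ and $b,c>1$, each of the three pair types can be multiplicatively dependent for at most one value of $i$; the only exception is $(b+i,c+i)$ when $\{b,c\}=\{2,8\}$, in which case $b,c$ are already bounded. Hence, for $\max\{b,c\}$ sufficiently large, the problem reduces to two configurations: either at least one shift is $3$-multiplicatively dependent, or all three shifts are $2$-multiplicatively dependent with the three dependent pair types realising a permutation of $\{(a+i,b+i),(a+i,c+i),(b+i,c+i)\}$.

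In each configuration I would substitute $\log(b+i)=\log b+\log(1+i/b)$ and $\log(c+i)=\log c+\log(1+i/c)$ into the three identities $x_i\log(a+i)+y_i\log(b+i)+z_i\log(c+i)=0$ and form an integer linear combination, chosen according to the case, that eliminates both $\log b$ and $\log c$. This produces an identity
\[
 \Lambda:=\alpha_0\log a+\alpha_1\log(a+1)+\alpha_2\log(a+2)=\text{error of size }O_a\!\Bigl(\tfrac{H}{\min\{b,c\}}\Bigr),
\]
with $H=\max\{|\alpha_0|,|\alpha_1|,|\alpha_2|\}$ polynomial in $\log\max\{b,c\}$. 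Since for $a\geq 3$ three consecutive integers $a,a+1,a+2$ are multiplicatively independent (easily seen by comparing prime factorisations, the sole exceptional case being $a=2$), the logarithms $\log a,\log(a+1),\log(a+2)$ are $\Q$-linearly independent and $\Lambda\neq 0$ whenever $(\alpha_0,\alpha_1,\alpha_2)\neq(0,0,0)$. Baker's theorem then yields $|\Lambda|\geq\exp(-C(a)\log H)$, and comparing the upper and lower bounds forces $\max\{a,b,c\}$ to be bounded effectively in terms of $a$, using where necessary Lemma~\ref{lem:zweiBeschreankt} to propagate a bound from the smaller of $b,c$ to both.

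The hard part will be verifying, in every branch of the above case analysis, that the chosen integer linear combination produces a non-zero coefficient vector $(\alpha_0,\alpha_1,\alpha_2)$. When at least one shift is $3$-multiplicatively dependent this reduces to the non-vanishing of a $2\times 2$ determinant built from the $x_i,y_i,z_i$ and is readily handled. In the purely $2$-multiplicatively dependent branch each of the six permutations of the dependent pair types requires a separate treatment: one uses the special forms of $a,b,c$ imposed by each dependent pair (for example $a=p^{\alpha_0},\,b=p^{\beta_0}$ when $(a,b)$ is the dependent pair of shift $0$), together with the fact that the bases of the three different dependent pairs are pairwise coprime, to write down an admissible combination and verify its non-triviality explicitly. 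Carrying out this case-by-case check is the technically most delicate step of the proof; once it is secured, Baker's theorem closes the argument uniformly.
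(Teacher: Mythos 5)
The key structural difference is that you eliminate both $\log b$ and $\log c$ simultaneously, reducing to a linear form $\Lambda=\alpha_0\log a+\alpha_1\log(a+1)+\alpha_2\log(a+2)$. Your observation that $a,a+1,a+2$ are multiplicatively independent for $a\ge 3$ (indeed for all $a>1$ with $a\neq 2$) is a genuinely nice shortcut for establishing $\Lambda\neq 0$, and it avoids the paper's detailed prime-by-prime bookkeeping. However, this aggressive elimination discards exactly the information the argument needs to close, and I do not see how to repair it along the lines you sketch.

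Here is the quantitative problem. With $H$ denoting the maximum of the $|x_i|,|y_i|,|z_i|$, your error estimate yields $|\Lambda|\ll_a H^{O(1)}/\min\{b,c\}$, while Baker/Matveev (with the three fixed bases $a,a+1,a+2$) gives $|\Lambda|\gg_a \exp(-C(a)\log H)$. Comparing gives only $\min\{b,c\}\ll_a H^{O_a(1)}$. Since $H$ is controlled by a power of $\log\max\{a+i,b+i,c+i\}$, this amounts to $\min\{b,c\}\ll_a (\log\max\{b,c\})^{K(a)}$, which is \emph{not} an absolute bound on $\min\{b,c\}$: it is perfectly compatible with $b$ of moderate size and $c$ astronomically large. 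Consequently Lemma~\ref{lem:zweiBeschreankt} cannot be invoked to ``propagate'' a bound, since that lemma requires $a$ and $b$ to be genuinely fixed integers, not merely bounded in terms of $\log c$. Nor does a second application of Baker retaining $\log b$ or $\log c$ rescue the situation, because that logarithm then enters the Baker lower bound as a height of an unbounded algebraic number, producing a lower bound for $|\Lambda|$ of size $\exp(-C(a)\log\max\{b,c\}\cdot\log H)$, which is far too small to beat the upper bound $\ll H^{O(1)}/\min\{b,c\}$.

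The paper avoids this trap by \emph{not} eliminating to the three fixed logarithms. It keeps the linear forms written in logarithms of the fixed primes dividing $a,a+1,a+2$ \emph{together with} the unknown common base $q_k$ of the dependent pair. The crucial structural fact, which your parametrisation by exponent vectors never exposes, is that $q_k$ divides both $b+k$ and $c+k$, hence $q_k\le\min\{b+k,c+k\}$; so once the first Matveev application bounds $\log\min\{b,c\}$ by $C\log M$ (with $M$ the maximum unknown exponent), one also gets $\log q_k\ll\log M$. Feeding this back into the \emph{original} two-term linear forms $\Lambda_{11},\Lambda_{12}$, a second Matveev application bounds $\log b$ and $\log c$ \emph{separately} by $C(\log M)^2$ (or $(\log M)^3$, depending on the case). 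Since the exponents satisfy $M\ll\log\max\{b,c\}$, this yields $M\ll(\log M)^{O(1)}$ and closes the loop. Your approach, by eliminating both $\log b$ and $\log c$ at the outset, retains no intermediate quantity (analogous to $q_k$) that is both (i) controlled by $\min\{b,c\}$ and (ii) usable to bound $\log b$ and $\log c$ individually. That is the missing idea, and without it the bootstrap does not terminate.
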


\section{Proof of Theorem \ref{thm:allg}}\label{sec:proof-allg}

We will split up the proof of Theorem \ref{thm:allg} into four cases, according to how many of the triples $(a,b,c), (a+1,b+1,c+1)$ and $(a+2,b+2,c+2)$ are 2-multiplicatively dependent and how many are 3-multiplicatively dependent. We will treat each case in a separate subsection. But first, we state some auxiliary results.

\subsection{Auxiliary results.}

The main tool of the proof are lower bounds for linear forms in logarithms. For instance, one can use the bounds obtained by Matveev \cite{Matveev}. For the integer case his results imply the following bound.

\begin{knownthm}[Matveev, integer case]
Let $a_1\bb a_n \in \Z_{\geq 2}$, $b_1\bb b_n \in \Z$ and
\[
	\Lambda = b_1 \log a_1 + \dots + b_n \log a_n.
\]
Suppose that $\Lambda\neq 0$. Then we have
\[
	\log |\Lambda|
	> - 0.5 (1 + \log 2) e n^{4.5} 30^{n+3} \log a_1 \cdots \log a_n (1+ \log \max\{|b_1|\bb |b_n|\}).
\]
\end{knownthm}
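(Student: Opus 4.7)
The strategy is to extract, from the three multiplicative dependency relations on $T_i := (a+i,b+i,c+i)$ ($i = 0,1,2$), non-trivial integer linear forms in logarithms of positive integers that are very small (of order $1/c$), and then apply Matveev's theorem to bound $b$ and $c$ effectively in terms of $a$. Following the indication in the paper, I split the argument into four cases according to the number $k \in \{0,1,2,3\}$ of triples $T_i$ that are 2-multiplicatively dependent; the remaining $3-k$ triples are then 3-multiplicatively dependent in the sense of Definition~\ref{def:k-mult}.

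In the main case $k=0$, each $T_i$ carries a primitive relation $(a+i)^{\alpha_i}(b+i)^{\beta_i}(c+i)^{\gamma_i} = 1$ with $\alpha_i \beta_i \gamma_i \neq 0$. Eliminating $\log(c+i)$ between the $i=0$ and $i=1$ relations yields
\[
\gamma_0\gamma_1 \log\!\left(1+\frac{1}{c}\right) = \alpha_0\gamma_1 \log a + \beta_0\gamma_1 \log b - \alpha_1\gamma_0 \log(a+1) - \beta_1\gamma_0 \log(b+1),
\]
a linear form $\Lambda$ in four logarithms of integers $\leq b+1$ with $|\Lambda| \leq |\gamma_0\gamma_1|/c$ and integer coefficients polynomial in $\log c$. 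Matveev's theorem with $n = 4$ then converts this into an inequality of the shape $\log c \ll_a \log b \log(b+1) \log\log c$. An analogous elimination with the pair $(T_0,T_2)$, using $\log(1+2/c) \asymp 2/c$, or eliminating $\log(b+i)$ instead of $\log(c+i)$, produces companion inequalities that also bound $b$. Combining these yields effective upper bounds on both $b$ and $c$ depending only on $a$.

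The cases $k \geq 1$ proceed by sub-analysis on which pair $(u,v) \in \{(a+i,b+i), (a+i,c+i), (b+i,c+i)\}$ realises the 2-multiplicative dependency in each 2-mult.\ dep.\ $T_i$. Theorem~\ref{thm:pair-allg} together with $a \notin \{2,8\}$ forces that at most one $i$ gives $(a+i,b+i)$ multiplicatively dependent and at most one gives $(a+i,c+i)$ dependent; the remaining 2-mult.\ dep.\ relations must therefore come from the pair $(b+i,c+i)$. In each configuration either $b$ or $c$ is pinned to a power of an integer determined by $a$, or the constraints reduce to an $S$-unit equation of the form handled in the proof of Lemma~\ref{lem:zweiBeschreankt}. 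Substituting these constraints into the relations of the remaining triples yields a small non-zero linear form in a bounded number of logarithms to which Matveev again applies.

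The main obstacle is controlling the size of the exponents $\alpha_i, \beta_i, \gamma_i$ in the primitive relations: a priori these can be large, but for a 3-multiplicatively dependent integer triple $(x,y,z)$ with $x \leq y \leq z$ the primitive relation has exponents bounded polynomially in $\log z$, as one sees by viewing the dependency lattice as the kernel of the matrix of $p$-adic valuations and choosing a minimum-norm generator. A secondary delicate point, ever-present in this type of argument, is verifying that the extracted linear form $\Lambda$ is genuinely non-zero in every subcase; this relies on the 3-multiplicative (as opposed to 2-multiplicative) dependency of the relevant triples, on the distinctness of $a,b,c$, and crucially on the hypothesis $a \notin \{2,8\}$, which excludes the Mihailescu coincidence $3^2 - 2^3 = 1$ that would otherwise allow $\Lambda$ to collapse to zero.
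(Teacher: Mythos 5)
Your proposal does not address the statement you were asked to prove. The statement is Matveev's lower bound for linear forms in logarithms of integers: given $a_1,\ldots,a_n \ge 2$ and integers $b_1,\ldots,b_n$ with $\Lambda = b_1\log a_1 + \cdots + b_n \log a_n \neq 0$, one has an explicit lower bound for $\log|\Lambda|$ in terms of $n$, the $\log a_i$, and $\log\max|b_i|$. This is a deep result in transcendence theory, proved by Baker's method (auxiliary functions / interpolation determinants, zero estimates, and extrapolation); in this paper it is a cited known theorem, stated as a specialisation of Matveev's general result for algebraic numbers to the rational integer case, and no proof is given or expected beyond the reduction from the general statement (taking the heights of the integers $a_i$ to be $\log a_i$ and the degree of the field to be $1$).

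What you have written instead is a proof plan for the paper's main result (Theorem~\ref{thm:allg}, the finiteness of consecutive multiplicatively dependent triples with $a$ fixed), i.e.\ a description of how to \emph{apply} Matveev's theorem, not how to \emph{prove} it. Nothing in your outline --- the case split on how many triples are 2- versus 3-multiplicatively dependent, the elimination of logarithms between dependency relations, the non-vanishing checks --- bears on the truth of the stated lower bound for $\log|\Lambda|$. As a proof of the statement in question, the proposal is therefore entirely off target; there is no partial credit to salvage, because no step of an actual proof of Matveev's inequality (or even of the reduction from Matveev's general theorem to the integer case) appears.
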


If a triple $(a,b,c)$ is 2-multiplicatively dependent, then we can write two of the numbers in the form $q^x$ and $q^y$.
In order to use that fact that a specific triple is 3-multiplicatively dependent, we will need the following lemma.

\begin{lem}\label{lem:3-multdep}
Let $(a,b,c)$ be a 3-multiplicatively dependent triple of positive integers and let 
\[
a=p_1^{x_1} \cdots p_n^{x_n}
\]
be the prime factorisation of $a$.
Then $b$ and $c$ are of the form 
\[
	b=p_1^{y_1}\cdots p_n^{y_n} \cdot q^\beta, \quad
	c=p_1^{z_1}\cdots p_n^{z_n} \cdot q^\gamma,
\]
where $q,y_1\bb y_n, \beta,z_1\bb z_n,\gamma$ are nonnegative integers. Moreover, we may assume that $q$, $\beta$ and $\gamma$ are positive and that
\[
	\gamma \cdot (y_1\bb y_n) \neq \beta \cdot (z_1\bb z_n). 
\]
Also, if $q>1$, then we may assume that 
\[
	\gamma y_i \neq \beta z_i
	\quad \text{for all } i=1\bb n.
\]
\end{lem}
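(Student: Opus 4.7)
The plan is to extract the structure of $b$ and $c$ directly from the defining multiplicative dependence relation of the triple. By $3$-multiplicative dependence there exist integers $k_1, k_2, k_3$, not all zero, with $a^{k_1}b^{k_2}c^{k_3}=1$, and the hypothesis that every $2$-subtuple is multiplicatively independent forces each $k_j$ to be nonzero (the vanishing of any one would yield a nontrivial multiplicative relation for the remaining pair). I would take $p$-adic valuations at primes $p\notin\{p_1,\ldots,p_n\}$, giving $k_2 v_p(b)+k_3 v_p(c)=0$, so that the coprime-to-$a$ parts $b^\ast:=\prod_{p\nmid a}p^{v_p(b)}$ and $c^\ast:=\prod_{p\nmid a}p^{v_p(c)}$ are multiplicatively dependent positive integers. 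Using the elementary fact (already exploited in Section~\ref{sec:pairs}) that two multiplicatively dependent positive integers are both powers of a common positive integer, I would write $b^\ast=q^\beta$ and $c^\ast=q^\gamma$, and setting $y_i:=v_{p_i}(b)$ and $z_i:=v_{p_i}(c)$ would give the claimed form.

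The positivity clauses require a small case analysis. The case $b^\ast=1$ forces $c^\ast=1$: any prime $p\mid c^\ast$ would produce $k_3 v_p(c)=0$, hence $k_3=0$, contradicting the independence of $(a,b)$. In the generic situation $b^\ast,c^\ast>1$ one takes the canonical representation with $q$ coprime to $a$, which delivers $q>1$ and $\beta,\gamma\geq 1$ directly. The vector inequality $\gamma\cdot(y_1,\ldots,y_n)\neq\beta\cdot(z_1,\ldots,z_n)$ is then a reformulation of the multiplicative independence of $(b,c)$: equality would combine with the automatic proportionality $\gamma v_p(b)=\beta v_p(c)$ at primes of $q$ to give the nontrivial relation $b^\gamma=c^\beta$.

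The main substantive point is the componentwise inequality $\gamma y_i\neq\beta z_i$ for every $i$ when $q>1$. Working in the canonical representation with $q$ coprime to $a$, the relation $k_2 v_p(b)+k_3 v_p(c)=0$ at any prime $p\mid q$ pins down $(k_2,k_3)=\lambda(\gamma,-\beta)$ for some $\lambda\neq 0$, and substituting into the prime-of-$a$ relation $k_1 x_i+k_2 y_i+k_3 z_i=0$ yields $k_1 x_i=\lambda(\beta z_i-\gamma y_i)$; since $k_1\neq 0$ and $x_i\geq 1$, the right-hand side is nonzero, which is exactly $\gamma y_i\neq\beta z_i$. The principal obstacle is the residual degenerate situation $b^\ast=c^\ast=1$, where $q$ must share primes with $a$: there one needs to choose a non-canonical representation with $\beta:\gamma$ different from every prime-wise ratio $v_{p_i}(b):v_{p_i}(c)$, which is achievable by a pigeonhole argument whenever a suitable common factor sitting inside $\gcd(b,c)$ is available to absorb into $q$.
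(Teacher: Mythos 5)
Your argument follows the paper's proof essentially step for step: writing the multiplicative relation $a^{k_1}b^{k_2}c^{k_3}=1$ with every $k_j\neq 0$ (the paper's $r,s,t$), taking valuations to split $b$ and $c$ into their $a$-parts and coprime-to-$a$ parts $b^\ast,c^\ast$, observing that $(b^\ast,c^\ast)$ is a multiplicatively dependent pair and hence equals $(q^\beta,q^\gamma)$ for a common base $q$ coprime to $a$, deriving the vector inequality from the multiplicative independence of $(b,c)$, and — for the componentwise claim — reading off $(k_2,k_3)$ proportional to $(\gamma,-\beta)$ from a prime of $q$ and then using $k_1x_i\neq 0$ to conclude $\gamma y_i\neq\beta z_i$. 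These are precisely the moves in the paper's proof.

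Your final sentence, however, misreads what needs to be established and invents an obstacle that is not there. When $b^\ast=c^\ast=1$ one simply sets $q=\beta=\gamma=1$: then $q$, $\beta$, $\gamma$ are all positive as required, and the clause ``if $q>1$, then $\gamma y_i\neq\beta z_i$ for all $i$'' is vacuously satisfied. The lemma only asserts the existence of \emph{one} representation with the listed properties; it does not ask you to force $q>1$, so there is no need to pull common prime factors of $a$, $b$, $c$ into $q$, and no pigeonhole argument is required. In particular the hedge ``whenever a suitable common factor $\ldots$ is available'' wrongly suggests the lemma might fail in the degenerate case, when in fact it holds trivially there. Once that sentence is removed, your proof is complete and coincides with the paper's.
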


\begin{proof}
First, let us write $b=p_1^{y_1}\cdots p_n^{y_n} \cdot q_1$ and $c=p_1^{z_1}\cdots p_n^{z_n} \cdot q_2$ with some nonnegative integers $y_1\bb y_n,z_1\bb z_n$ and positive integers $q_1,q_2$ with $\gcd(p_i,q_1)=\gcd(p_i,q_2)=1$ for $i=1\bb n$. 

Since $(a,b,c)$ is 3-multiplicatively dependent, there are non-zero integers $r,s,t$ such that $a^rb^sc^t=1$, i.e.
\begin{equation}\label{eq:3-mult}
	(p_1^{x_1} \cdots p_n^{x_n})^r 
	(p_1^{y_1}\cdots p_n^{y_n} \cdot q_1)^s 
	(p_1^{z_1}\cdots p_n^{z_n} \cdot q_2)^t
		= 1.
\end{equation}
Since $\gcd(p_i,q_j)=1$ for $i=1\bb n$, $j=1,2$, we have in particular  $q_1^s q_2^t=1$, i.e. $q_1$ and $q_2$ are multiplicatively dependent. This means that $q_1=q^\beta$ and $q_2=q^\gamma$ for some suitable positive integer $q$ and nonnegative integers $\beta, \gamma$.

If $q>1$ and one of the exponents $\beta,\gamma$ is zero, then since $q^{\beta s}q^{\gamma t}=1$ and $r,s$ are non-zero the other exponent is zero as well. In this case we can set $q=\beta=\gamma=1$, so in any case we may assume that $\beta>0$ and $\gamma>0$.

If $\gamma \cdot (y_1\bb y_n) = \beta \cdot (z_1\bb z_n)$, then $b^\gamma c^{-\beta}=(p_1^{y_1}\cdots p_n^{y_n} \cdot q^\beta)^\gamma (p_1^{z_1}\cdots p_n^{z_n}\cdot q^\gamma)^{-\beta}=1$. This means that $(a,b,c)$ is actually 2-multiplicatively dependent and not 3-multiplicatively dependent. 

Finally, assume that $q>1$ and that $\gamma y_i = \beta z_i$ for some $i\in \{1\bb n\}$. 
Since $\gcd(p_j,q)=1$ for $j=1\bb n$ and $q>1$, we get from \eqref{eq:3-mult} that $\beta s = -\gamma t$, i.e. $t=-s\frac{\beta}{\gamma}$. Now we have
\[
	0
	= r x_i + s y_i + t z_i 
	= r x_i + s y_i - s\frac{\beta}{\gamma} z_i
	= r x_i + s y_i - s y_i
	= r x_i,
\]
which implies $x_i=0$ or $r=0$, a contradiction.
\end{proof}

Now we are ready for the proof of Theorem \ref{thm:allg}. In the next four subsections we consider one case at a time. At the end of Section \ref{sec:case0} there are some remarks on the arguments that will be reused in the further subsections.

\subsection{All triples are 2-multiplicatively dependent}\label{sec:case0}

Let $a>1$ be a fixed integer with $a \notin\{2,8\}$. Suppose that $b,c\neq a$ are distinct integers larger than 1 such that $(a,b,c)$, $(a+1,b+1,c+1)$ and $(a+2,b+2,c+2)$ are each 2-multiplicatively dependent.
We prove that there is an effectively computable bound for $b$ and $c$ which only depends on $a$.

First, note that by Theorem \ref{thm:pair-allg} at most one of the pairs $(a,b),(a+1,b+1),(a+2,b+2)$ can be multiplicatively dependent. Equally, at most one of the pairs $(a,c),(a+1,c+1),(a+2,c+2)$ can be multiplicatively dependent. Also, at most one of the pairs $(b,c),(b+1,c+1),(b+2,c+2)$ can be multiplicatively dependent, unless $\{b,c\}=\{2,8\}$. Therefore, we may assume that the pairs $(a+i,b+i)$, $(a+j,c+j)$ and $(b+k,c+k)$ are each multiplicatively dependent, where $\{i,j,k\}=\{0,1,2\}$. Thus we can write
\begin{align*}
(a+i,b+i,c+i)&=(q_i^{\alpha_i}, q_i^{\beta_i}, c+i),\\
(a+j,b+j,c+j)&=(q_j^{\alpha_j}, b+j, q_j^{\gamma_j}),\\
(a+k,b+k,c+k)&=(a+k, q_k^{\beta_k}, q_k^{\gamma_k}),
\end{align*}
where $q_i,q_j,q_k, \alpha_i,\alpha_j,\beta_i,\beta_k,\gamma_j,\gamma_k$ are positive integers and $q_i, q_j, \alpha_i, \alpha_j$ are fixed, because $a$ and therefore $a+i$ and $a+j$ are fixed integers.

We define the maximum of all unknown exponents:
\[
	M:= \max\{\beta_i, \gamma_j, \beta_k,\gamma_k\} \geq 2.
\]
The estimation $M\geq 2$ can be seen e.g. from the fact that $b+k=q_k^{\beta_k}$ and $c+k=q_k^{\gamma_k}$ are two distinct integers larger than 1, so either $\beta_k \geq 2$ or $\gamma_k \geq 2$.

Next, we can write
\begin{align}
\eps_{ik}:=i-k=(b+i)-(b+k)&=q_i^{\beta_i} - q_k^{\beta_k}, \label{eq:f0_1}\\
\eps_{jk}:=j-k=(c+j)-(c+k)&=q_j^{\gamma_j} - q_k^{\gamma_k}. \label{eq:f0_2}
\end{align}

It should be mentioned that since $q_i$ and $q_j$ are fixed, an application of a theorem by Shorey and Tijdeman \cite[Theorem 12.1]{ShoreyTijdeman1986} relatively quickly implies our desired result. However, this is a good opportunity to show how the arguments in the subsections below will work. Therefore, we directly apply linear forms in logarithms.

We divide \eqref{eq:f0_1} by $b+k=q_k^{\beta_k}$ and \eqref{eq:f0_2} by $c+k=q_k^{\gamma_k}$ and take absolute values obtaining
\begin{align}
\left|\frac{q_i^{\beta_i}}{q_k^{\beta_k}}-1\right|
	=\frac{|\eps_{ik}|}{b+k}
	\leq \frac{2}{b}, \label{eq:f0_u1}\\
\left|\frac{q_j^{\gamma_j}}{q_k^{\gamma_k}}-1\right|
	=\frac{|\eps_{jk}|}{c+k}
	\leq \frac{2}{c}. \label{eq:f0_u2}
\end{align}
If $\frac{2}{b}\geq 0.5$, then $b\leq 4$ and we are done by Lemma \ref{lem:zweiBeschreankt}. Therefore, we may assume that $\frac{2}{b} < 0.5$ and analogously that $\frac{2}{c} < 0.5$. Since $\log |x| < 2|x-1|$ for $|x-1|<0.5$, inequalities \eqref{eq:f0_u1} and \eqref{eq:f0_u2} imply
\begin{align}
	|\Lambda_1|		
		&:= |\beta_i \log q_i - \beta_k \log q_k| 
		< \frac{4}{b},\label{eq:f0_log1}\\
	|\Lambda_2|		
		&:= |\gamma_j \log q_j - \gamma_k \log q_k| 
		< \frac{4}{c}.\label{eq:f0_log2}
\end{align}
Next, we eliminate $\log q_k$ by computing
\[
	\Lambda := \gamma_k \Lambda_1 - \beta_k \Lambda_2.
\]
Thus we obtain the inequality
\begin{equation}\label{eq:f0_matv}
	|\Lambda|	
	=|\gamma_k \beta_i \log q_i - \beta_k \gamma_j q_j|
	< \frac{4 \gamma_k}{b} + \frac{4 \beta_k}{c}
	< \frac{8 M}{\min\{b,c\}}.
\end{equation}
Note that $q_i$ and $q_j$ have different prime factors, because $a+i=q_i^{\alpha_i}$ and $a+j=q_j^{\alpha_j}$ are either consecutive numbers or have difference 2. The only way they could have exactly the same prime factors would be if $a=2$, which we excluded. Therefore, $\Lambda$ can only be zero if both $\gamma_k \beta_i=0$ and $\beta_k \gamma_j=0$, which is impossible because all exponents are positive. Thus we can apply Matveev's theorem (for a better bound, at this point one might prefer to use a result by Laurent \cite{Laurent} for linear forms in two logarithms) with $n=2$, $a_1=q_i$, $b_1=\gamma_k \beta_i$, $a_2=q_j$, $b_2=-\beta_k \gamma_j$ and obtain
\[
	- C_{1} \log q_i \log q_j (1+ \log \max\{\gamma_k \beta_i, \beta_k \gamma_j\})
	< \log |\Lambda|
	< \log 8 + \log M - \log \min\{b,c\}.
\]
Since $q_i$ and $q_j$ are fixed and $\max\{\gamma_k\beta_i,\beta_k \gamma_j\} \leq M^2$, we can estimate
\[
	-C_{2} \log (M^2) < \log 8 + \log M - \log \min\{b,c\},
\]
where the constant $C_{2}$ now depends on $a$. All further constants $C_l$ in this proof are positive and only depend on $a$ as well. Shifting expressions and estimating, we obtain
\[
	\log \min\{b,c\} < C_{3} \log M.
\]
Next, note that $q_k \leq \min\{q_k^{\beta_k},q_k^{\gamma_k}\} = \min \{b+k,c+k\}\leq \min\{b,c\}+2$, so we get that
\[
	\log q_k < C_{4} \log M.
\]
Now we go back to inequality \eqref{eq:f0_log1} and apply Matveev's theorem obtaining 
\[
	-C_{5} \log q_i \log q_k (1+ \log \max\{\beta_i, \beta_k\})
	< \log | \Lambda_1| 
	< \log 4 - \log b.
\]
Since $\log q_i$ is fixed, $\log q_k < C_{4} \log M$ and $\max\{\beta_i,\beta_k\}\leq M$, we can estimate
\[
	-C_{6} (\log M)^2 < \log 4 - \log b,
\]
which implies
\begin{equation}\label{eq:f0_boundB}
	\log b < C_{7} (\log M)^2.
\end{equation}
Thus we get
\begin{align}
\max\{\beta_i, \beta_k\} 
	&= \max\{ \log (q_i^{\beta_i}) / \log q_i, \log (q_k^{\beta_k}) / \log q_k \} \label{eq:f0_boundBeta}\\
	&\leq \max\{ \log (q_i^{\beta_i}), \log (q_k^{\beta_k}) \} / \log 2 \nonumber\\
	&= \max\{\log (b+i),\log(b+k)\} / \log 2 \nonumber\\
	&\leq \log (b+2) / \log 2 \nonumber\\
	&< C_{8}(\log M)^2.\nonumber
\end{align}
Analogously, we can go back to \eqref{eq:f0_log2} and obtain
\begin{equation}\label{eq:f0_boundC}
	\log c < C_{9} (\log M)^2
\end{equation}
and
\begin{equation}\label{eq:f0_boundGamma}
	\max\{\gamma_j, \gamma_k \} < C_{10} (\log M)^2.
\end{equation}
Together, \eqref{eq:f0_boundBeta} and \eqref{eq:f0_boundGamma} give us
\[
	M < C_{11} (\log M)^2,
\]
which implies that $M$ is effectively bounded. Finally, \eqref{eq:f0_boundB} and \eqref{eq:f0_boundC} imply that $b$ and $c$ are effectively bounded as well, which completes the proof of Theorem \ref{thm:allg} in this case. 

\medskip

Before we continue, let us point out some steps that will be repeated in the following subsections in less detail.

\begin{rem}\label{rem:M}
We will always denote the maximum of all unknown exponents by $M$. 
For technical reasons (estimations involving $\log M$) we need $M\geq 2$. 
In the above proof it was obvious that the maximum of all unknown exponents was indeed at least 2. Since this will not always be the case, we will sometimes artificially include 2 in the maximum to guarantee that $M\geq 2$.
\end{rem}

\begin{rem}\label{rem:log}
We will often encounter equations similar to \eqref{eq:f0_1} of the form
\[
	\eps_{ij}
	=i-j 
	=(d+i)-(d+j)
	=q_{11}^{y_{11}}\cdots q_{1n}^{y_{1n}} - q_{21}^{z_{21}}\cdots q_{2m}^{z_{2m}},
\]
where $d=b$ or $d=c$ and $i,j$ are distinct integers in $\{0,1,2\}$. Then we may always divide by $d+j=q_{21}^{z_{21}}\cdots q_{2m}^{z_{2m}}$, take absolute values and estimate:
\[
	 \left| \frac{q_{11}^{y_{11}}\cdots p_{1n}^{y_{1n}}}{q_{21}^{z_{21}}\cdots q_{2m}^{z_{2m}}} - 1 \right|
	 \leq \frac{2}{d}.
\]
If $\frac{2}{d} \geq 0.5$, then we are done by applying Lemma \ref{lem:zweiBeschreankt}. Therefore, we may always use $\log|x|<2|x-1|$ for $|x-1|<0.5$ and obtain
\[
	| y_{11} \log q_{11} + \dots + y_{1n} \log q_{1n} 
		-  z_{21} \log q_{21} - \dots - z_{2m} \log q_{2m}|
	< \frac{4}{d}. 
\]
Also, note that such linear forms in logarithms are never equal to zero because $\eps_{ij}\neq 0$.
\end{rem}

\begin{rem}\label{rem:linform_zero}
We will often have to check that certain linear forms in logarithms, like in \eqref{eq:f0_matv}, are non-zero. The linear forms will usually be of the form
\[
	|A_{01} \log p_{01} + \dots + A_{0n} \log p_{0n} 
	+A_{11} \log p_{11} + \dots + A_{1m} \log p_{1m}
	+A_{21} \log p_{21} + \dots + A_{2k} \log p_{2k}|,
\]
where the $p$'s divide the three consecutive integers $a,a+1$ and $a+2$, i.e. $p_{01}\bb p_{0n}$ are the prime divisors of $a$, $p_{11}\bb p_{1m}$ are the prime divisors of $a+1$ and $p_{21}\bb p_{2k}$ are the prime divisors of $a+2$. Since consecutive integers have no common primes, the linear form can only vanish if all coefficients $A_{ij}$ vanish, or if $p_{01}=p_{21}=2$, $A_{01}=-A_{21}$, and all other coefficients vanish.
 We will have to check these possibilities at each occurrence.
\end{rem}

\begin{rem}\label{rem:const}
All constants $C_{l}$ will be positive, effectively computable and only depend on $a$.
\end{rem}

\begin{rem}\label{rem:matv}
We will often apply Matveev's theorem to an inequality of the form
\[
	|\Lambda|=|A_1 \log q_1 + \dots + A_n \log q_n +
	 B_1 \log r_1 + \dots + B_m \log r_m|
	< \frac{C_{l_1} M^{k_1}}{\min\{b,c\}},
\]
where the logarithms $\log q_1\bb \log q_n$ are fixed, the logarithms $\log r_1 \bb \log r_m$ are bounded by $C_{l_2} \log M$ and
all coefficients $A_i, B_j$ are bounded in absolute values by $C_{l_3} M^{k_3}$.
Then an application of Matveev's theorem yields
\[
	-C_{l_4}(\log M)^m (1+ \log (C_{l_3} M^{k_3}) )
	< \log |\Lambda|
	< \log (C_{l_1} M^{k_1}) - \log \min\{b,c\},
\]
hence we obtain
\[
	\log\min\{b,c\} 
	< C_{l_5} (\log M)^{m+1}.
\]
\end{rem}

\begin{rem}\label{rem:expos}
Estimations like in \eqref{eq:f0_boundBeta} will also be done frequently: If we have 
\[
	\log b < C_{l_1} (\log M)^m,
\]
where $b+i=p_1^{y_1}\cdots p_n^{y_n} q ^\beta$ and all integers $p_1 \cdots p_n, q$ are larger than 1, then we also have
\[
	\max\{y_1\bb y_n, \beta \} < C_{l_2} (\log M)^m.
\]
If $q=1$, then we can simply assume $\beta=1$ in the first place, so the bound still holds.
\end{rem}

\subsection{Exactly one of the triples is 3-multiplicatively dependent}\label{sec:case1}

Again, let $a>1$ be a fixed integer with $a \notin\{2,8\}$. Suppose that $b,c\neq a$ are distinct integers larger than 1 such that $(a+i,b+i,c+i)$ and $(a+j,b+j,c+j)$ are 2-multiplicatively dependent and $(a+k,b+k,c+k)$ is 3-multiplicatively dependent, where $\{i,j,k\}=\{0,1,2\}$.
We need to show that $b$ and $c$ are effectively bounded.

By Theorem \ref{thm:pair-allg} the pairs $(b+i,c+i)$ and $(b+j,c+j)$ cannot both be multiplicatively dependent, unless $\{b,c\}=\{2,8\}$. Assume without loss of generality that $(b+i,c+i)$ is not multiplicatively dependent. Since $(a+i,b+i,c+i)$ is 2-multiplicatively dependent, either $(a+i,b+i)$ or $(a+i,c+i)$ is multiplicatively dependent. Assume without loss of generality that $(a+i,b+i)$ is multiplicatively dependent. Then we are left with two cases: either $(a+j,c+j)$ is multiplicatively dependent or $(b+j,c+j)$ is multiplicatively dependent.

\caselI{Case 1:} $(a+i,b+i)$ and $(a+j,c+j)$ are each multiplicatively dependent. 
Then, using Lemma \ref{lem:3-multdep}, we can write
\begin{align*}
 (a+i,b+i,c+i) &= (q_i^{\alpha_i},q_i^{\beta_i},c+i),\\
 (a+j,b+j,c+j) &= (q_j^{\alpha_j},b+j,q_j^{\gamma_j}),\\
 (a+k,b+k,c+k) &= (p_1^{x_1}\cdots p_n^{x_n}, 
 				   p_1^{y_1}\cdots p_n^{y_n} \cdot q_k^{\beta_k},
 				   p_1^{z_1}\cdots p_n^{z_n} \cdot q_k^{\gamma_k}),
\end{align*}
where $q_i,q_j$ are fixed integers larger than 1, $p_1\bb p_n$ are fixed primes and $\alpha_i,\alpha_j, x_1\bb x_n$ are fixed positive integers. The unknown integers $y_1\bb y_n, z_1\bb z_n$ are nonnegative and the unknown integers $q_k, \beta_i,\beta_k,\gamma_j,\gamma_k$ are positive.
We set
\[
	M:=\max\{2,\beta_i, \beta_k, \gamma_j, \gamma_k, y_1 \bb y_n, z_1 \bb z_n\} \geq 2.
\]
We have
\begin{align*}
	\eps_{ik}:= i-k =(b+i)- (b+k) &= q_i^{\beta_i} - p_1^{y_1}\cdots p_n^{y_n} \cdot q_k^{\beta_k},\\
	\eps_{jk}:=j-k = (c+j)-(c+k) &= q_j^{\gamma_j} - p_1^{z_1}\cdots p_n^{z_n} \cdot q_k^{\gamma_k}.
\end{align*}
By Remark \ref{rem:log} this implies
\begin{align}
|\Lambda_{11}|:=|\beta_i \log q_i - y_1 \log p_1 - \dots - y_n \log p_n - \beta_k \log q_k| < \frac{4}{b},\label{eq:beta_i}\\
|\Lambda_{12}|:=|\gamma_j \log q_j - z_1 \log p_1 - \dots - z_n \log p_n - \gamma_k \log q_k| < \frac{4}{c}. \label{eq:gamma_j}
\end{align}
Now we eliminate $\log q_k$ by computing 
\[
	\Lambda_1:=\gamma_k\Lambda_{11}-\beta_k\Lambda_{12}.
\]
Thus we get the inequality 
\begin{multline}\label{eq:f1_matv1}
	|\Lambda_1|= 
	|\gamma_k \beta_i \log q_i - \beta_k \gamma_j \log q_j
		- (\gamma_k y_1 - \beta_k z_1) \log p_1 - \dots - (\gamma_k y_n - \beta_k z_n) \log p_n | \\
	< \frac{4 \gamma_k}{b} + \frac{4 \beta_k}{c}
	< \frac{8M}{\min\{b,c\}}.
\end{multline}
We check that $\Lambda_1 \neq 0$: 
Assume for the moment that $\Lambda_1=0$. By Remark \ref{rem:linform_zero}, this can only happen if all coefficients of all logarithms, apart from possibly $\log 2$ (or a power of 2), vanish. Since $\gamma_k \beta_i \neq 0$ and  $\beta_k \gamma_j \neq 0$, $q_i$ and $q_j$ have to be powers of 2. But then $a+i$ and $a+j$ are powers of 2, which is only possible if $a=2$, which we excluded.

Next, note that all coefficients in $\Lambda_1$ are bounded in absolute values by $M^2$ and that the logarithms $\log q_i$, $\log q_j$, $\log p_1\bb \log p_n$ are fixed.
By Remark \ref{rem:matv} an application of Matveev's theorem to \eqref{eq:f1_matv1} yields
\[
	\log \min\{b,c\}
	< C_{12} \log M.
\]
Since $q_k\leq b+k$ and $q_k \leq c+k$ we obtain
\[
	\log q_k < C_{13} \log M.
\]

Now we go back to \eqref{eq:beta_i} and apply Matveev's theorem. Since $\log q_i$ and $\log p_1 \bb \log p_n$ are fixed, $\log q_k < C_{13} \log M$ and $\beta_i, y_1 \bb y_n, \beta_k \leq M$, we obtain with Remark \ref{rem:matv}
\begin{equation}\label{eq:f1_b}
	\log b	
	< C_{14} (\log M)^2.
\end{equation}
Since $b=q_i^{\beta_i}-i=p_1^{y_1}\cdots p_n^{y_n} q_k^{\beta_k} - k$, this implies
\begin{equation}\label{eq:maxteil1}
	\max\{\beta_i,y_1 \bb y_n, \beta_k\} < C_{15}(\log M)^2.
\end{equation}

Analogously we can use inequality \eqref{eq:gamma_j} in order to obtain
\begin{equation}\label{eq:f1_c}
	\log c	
	< C_{16} (\log M)^2
\end{equation}
and
\begin{equation}\label{eq:maxteil2}
	\max\{\gamma_j, z_1 \bb z_n, \gamma_k\} < C_{17}(\log M)^2.
\end{equation}

Together, inequalities \eqref{eq:maxteil1} and \eqref{eq:maxteil2} give
\[
	M < C_{18} (\log M)^2,
\]
which implies $M<C_{19}$ and by \eqref{eq:f1_b} and \eqref{eq:f1_c} $b$ and $c$ are bounded as well.

\caselI{Case 2:} $(a+i,b+i)$ and $(b+j,c+j)$ are each multiplicatively dependent.
Then we can write
\begin{align*}
 (a+i,b+i,c+i) &= (q_i^{\alpha_i},q_i^{\beta_i},c+i),\\
 (a+j,b+j,c+j) &= (a+j,q_j^{\beta_j},q_j^{\gamma_j}),\\
 (a+k,b+k,c+k) &= (p_1^{x_1}\cdots p_n^{x_n}, 
 				   p_1^{y_1}\cdots p_n^{y_n} \cdot q_k^{\beta_k},
 				   p_1^{z_1}\cdots p_n^{z_n} \cdot q_k^{\gamma_k}),
\end{align*}
where $q_i$ is a fixed integer larger than 1, $p_1\bb p_n$ are fixed primes and $\alpha_i,x_1\bb x_n$ are fixed positive integers. The unknown integers $y_1\bb y_n, z_1\bb z_n$ are nonnegative and the unknown integers $q_j, q_k, \beta_i,\beta_k,\gamma_j,\gamma_k$ are positive.
We set
\[
	M:=\max\{\beta_i, \beta_j, \beta_k, \gamma_j, \gamma_k, y_1 \bb y_n, z_1 \bb z_n\} \geq 2.
\]
We have 
\begin{align*}
	\eps_{ik}:=i-k = (b+i)-(b+k)&= q_i^{\beta_i} - p_1^{y_1}\cdots p_n^{y_n} \cdot q_k^{\beta_k},\\
	\eps_{jk}:=j-k = (b+j)-(b+k) &= q_j^{\beta_j} - p_1^{y_1}\cdots p_n^{y_n} \cdot q_k^{\beta_k},\\
	\eps_{jk} =j-k = (c+j)-(c+k) &= q_j^{\gamma_j} - p_1^{z_1}\cdots p_n^{z_n} \cdot q_k^{\gamma_k},
\end{align*}
which implies
\begin{align}
|\Lambda_{21}|:=|\beta_i \log q_i - y_1 \log p_1 - \dots - y_n \log p_n - \beta_k \log q_k|
	&< \frac{4}{b}, \label{eq:temp1} \\
|\Lambda_{22}|:=|\beta_j \log q_j - y_1 \log p_1 - \dots - y_n \log p_n - \beta_k \log q_k|
	&< \frac{4}{b},\label{eq:temp2}\\
|\Lambda_{23}|:=|\gamma_j \log q_j - z_1 \log p_1 - \dots - z_n \log p_n - \gamma_k \log q_k|
	&< \frac{4}{c}. \label{eq:temp3}
\end{align}
First note that if $q_k=1$, then $\log q_k$ vanishes in \eqref{eq:temp1} and since $q_i$ is fixed, we can apply Matveev's theorem directly to \eqref{eq:temp1} and obtain a bound of the form 
\[
	\log \min\{b,c\}\leq \log b < C_{20} \log M.
\] 
From now on assume that $q_k>1$.

We eliminate $\log q_j$ by setting
\[
	\Lambda'_2:=\gamma_j\Lambda_{22}-\beta_j\Lambda_{23}
\]
and obtain the inequality
\begin{multline}\label{eq:temp4}
|\Lambda'_2|=| (\gamma_j y_1 - \beta_j z_1) \log p_1 + \dots
	+ (\gamma_j y_n - \beta_j z_n) \log p_n 
	+ (\gamma_j \beta_k - \beta_j\gamma_k) \log q_k |\\
< \frac{4 \gamma_j}{b} + \frac{4 \beta_j}{c}
< \frac{8M}{\min\{b,c\}}.
\end{multline}

Before we eliminate $\log q_k$ we consider the case where the term involving $\log q_k$ already vanishes in \eqref{eq:temp4}.

\caselII{Case 2.1:} The term involving $\log q_k$ vanishes in \eqref{eq:temp4}, i.e. $\gamma_j \beta_k - \beta_j\gamma_k=0$. Then \eqref{eq:temp4} becomes
\[
	|\Lambda'_{2}|=
	| (\gamma_j y_1 - \beta_j z_1) \log p_1 + \dots
	+ (\gamma_j y_n - \beta_j z_n) \log p_n |
	< \frac{8M}{\min\{b,c\}}.
\]
We check that $\Lambda'_{2}\neq 0$: Assume that $\Lambda'_{2}=0$. Then since $p_1\bb p_n$ are distinct primes, all coefficients $\gamma_j y_l - \beta_j z_l$, $l=1\bb n$, must be zero, i.e. $\gamma_j \cdot (y_1\bb y_n) = \beta_j \cdot (z_1\bb z_n)$.
By the case assumption we have $\gamma_j \beta_k = \beta_j\gamma_k$, which implies $\gamma_k \cdot (y_1\bb y_n) = \beta_k \cdot (z_1\bb z_n)$. But this is excluded in Lemma \ref{lem:3-multdep}.

Therefore $\Lambda'_{2}\neq 0$ and we can apply Matveev's theorem. Noting that $\log p_1\bb \log p_n$ are fixed and the coefficients are bounded in absolute values by $M^2$, we obtain by Remark \ref{rem:matv}
\[
	\log \min\{b,c\} 
	< C_{21} \log M.
\]

\caselII{Case 2.2:} $\gamma_j \beta_k - \beta_j\gamma_k \neq 0$.
Now we eliminate $\log q_k$ by setting 
\[
	\Lambda_2:=(\gamma_j \beta_k - \beta_j \gamma_k)\Lambda_{21}+\beta_k \Lambda'_2.
\]
Thus we get
\begin{equation}\label{eq:linform_A2}
|\Lambda_2|=
|A_0 \log q_i + A_1 \log p_1 + \dots + A_n \log p_n|
	< \frac{4|\gamma_j \beta_k - \beta_j \gamma_k|}{b} + \frac{8M\beta_k}{\min\{b,c\}}
	< \frac{12 M^2}{\min \{b,c\}},
\end{equation}
where
\begin{align*}
A_0&= \beta_i (\gamma_j \beta_k - \beta_j \gamma_k),\\
A_l&= - y_l(\gamma_j \beta_k - \beta_j \gamma_k) + \beta_k(\gamma_j y_l - \beta_j z_l)
	=\beta_j(\gamma_k y_l - \beta_k z_l), 
\quad \text{for } l=1\bb n.
\end{align*}

We check that $\Lambda_2 \neq 0$: Assume that $\Lambda_2=0$. 
By the case assumption we have $\gamma_j \beta_k - \beta_j\gamma_k \neq 0$ and since $\beta_i >0$, it follows that $A_0\neq 0$. By Remark \ref{rem:linform_zero} this means that $q_i$ is a power of 2 and that all $A_i$'s vanish except for the coefficient of $\log 2=\log p_1$. In other words, we have $A_l=0$ for all $l\geq 2$, i.e. $\gamma_k y_l = \beta_k z_l$ for $l=2\bb n$. Since $q_k>1$, Lemma \ref{lem:3-multdep} implies that $n=1$ and therefore $a+k$ is a power of 2. But we already deduced that $q_i$ is a power of 2, so $a+i$ is a power of 2 as well, which is impossible as $a\neq 2$.

Thus $\Lambda_2 \neq 0$ and we can apply Matveev's theorem to \eqref{eq:linform_A2}.
Noting that $\log q_i$, $\log p_1\bb $ $\log p_n$ are fixed and $|A_0|,|A_1|\bb |A_n|\leq M^3$, we obtain with Remark \ref{rem:matv}
\begin{equation*}
\log \min \{b,c\} < C_{22} \log M,
\end{equation*}
as we did in Case 2.1.

\smallskip

Since $q_k \leq b+k$ and $q_k \leq c+k$, we obtain
\begin{equation*}
\log q_k < C_{23}\log M.
\end{equation*}
Moreover, since $q_j \leq b+j$ and $q_j \leq c+j$, we also obtain
\[
	\log q_j
	< C_{24} \log M.
\]
Now we can go back to inequalities \eqref{eq:temp2} and \eqref{eq:temp3} and an application of Matveev's theorem to each of them yields
\begin{align}
	\log b < C_{25} (\log M)^3, \label{eq:temp6} \\
	\log c < C_{26} (\log M)^3. \label{eq:temp7}
\end{align}
Analogously to Case 1, these two inequalities imply
\[
	M < C_{27} (\log M)^3,
\]
so $M$ is effectively bounded and from \eqref{eq:temp6} and \eqref{eq:temp7} we see that $b$ and $c$ are bounded as well. 

\subsection{Exactly two triples are 3-multiplicatively dependent} \label{sec:case2}

Let $a>1$ be a fixed integer with $a \notin\{2,8\}$.
Suppose that $b,c\neq a$ are distinct integers larger than 1 such that $(a+i,b+i,c+i)$ is 2-multiplicatively dependent and $(a+j,b+j,c+j)$ and $(a+k,b+k,c+k)$ are 3-multiplicatively dependent, where $\{i,j,k\}=\{0,1,2\}$.

Since $(a+i,b+i,c+i)$ is 2-multiplicatively dependent, either $(a+i,b+i)$ or $(a+i,c+i)$ or $(b+i,c+i)$ is multiplicatively dependent.

\caselI{Case 1:} Either $(a+i,b+i)$ or $(a+i,c+i)$ is multiplicatively dependent. Assume, without loss of generality, that $(a+i,b+i)$ is multiplicatively dependent. 
Then by Lemma \ref{lem:3-multdep} we can write
\begin{align*}
(a+i,b+i,c+i)&=(q_i^{\alpha_i}, q_i^{\beta_i}, c+i),\\
(a+j,b+j,c+j)&=(p_{j1}^{x_{j1}} \cdots p_{jm}^{x_{jm}}, 
	p_{j1}^{y_{j1}} \cdots p_{jm}^{y_{jm}} \cdot q_j^{\beta_j}, 
	p_{j1}^{z_{j1}} \cdots p_{jm}^{z_{jm}} \cdot q_j^{\gamma_j}),\\
	(a+k,b+k,c+k)&=(p_{k1}^{x_{k1}} \cdots p_{kn}^{x_{kn}}, 
	p_{k1}^{y_{k1}} \cdots p_{kn}^{y_{kn}} \cdot q_k^{\beta_k}, 
	p_{k1}^{z_{k1}} \cdots p_{kn}^{z_{kn}} \cdot q_k^{\gamma_k}), 
\end{align*}
where $q_i$ is a fixed integer larger than 1, the $p$'s are fixed primes and $\alpha_i$ and the $x$'s are fixed positive integers. The $y$'s and $z$'s are unknown nonnegative integers and $q_j,q_k,\beta_1,\beta_j,\beta_k,\gamma_j$, $\gamma_k$ are also unknown and positive.
We set
\[
	M:= \max\{2,\beta_i,\beta_j,\beta_k, \gamma_j, \gamma_k, 
		y_{j1} \bb y_{jm}, z_{j1}\bb z_{jm},
		y_{k1} \bb y_{kn}, z_{k1}\bb z_{kn}\}
	\geq 2.
\]
Next we can write
\begin{align*}
\eps_{ij}:= i-j = (b+i)-(b+j) &= q_i^{\beta_i} - p_{j1}^{y_{j1}} \cdots p_{jm}^{y_{jm}} \cdot q_j^{\beta_j},\\
\eps_{ik}:= i-k = (b+i)-(b+k) &= q_i^{\beta_i} - p_{k1}^{y_{k1}} \cdots p_{kn}^{y_{kn}} \cdot q_k^{\beta_k},\\
\eps_{jk}:= j-k = (c+j)-(c+k) &= p_{j1}^{z_{j1}} \cdots p_{jm}^{z_{jm}} \cdot q_j^{\gamma_j} - p_{k1}^{z_{k1}} \cdots p_{kn}^{z_{kn}} \cdot q_k^{\gamma_k},
\end{align*}
which by Remark \ref{rem:log} implies
\begin{align}
|\Lambda_{11}|:=|\beta_i \log q_i - y_{j1} \log p_{j1} - \dots - y_{jm} \log p_{jm} - \beta_ j\log q_j|
	&< \frac{4}{b}, \label{eq:1von3}\\
|\Lambda_{12}|:=|\beta_i \log q_i - y_{k1} \log p_{k1} - \dots - y_{kn} \log p_{kn} - \beta_k \log q_k|
	&< \frac{4}{b}, \label{eq:2von3}\\
|\Lambda_{13}|:=|z_{j1} \log p_{j1} + \dots + z_{jm} \log p_{jm} + \gamma_j \log q_j  \phantom{mmmmx \frac{2}{b}}\label{eq:3von3}\\
	- z_{k1} \log p_{k1} - \dots - z_{kn} \log p_{kn} - \gamma_k \log q_k|
	 &< \frac{4}{c}.\nonumber
\end{align}
First note that if $q_j=1$ or $q_k=1$, then either $\log q_j$ vanishes in \eqref{eq:1von3} or $\log q_k$ vanishes in~\eqref{eq:2von3}. Since $q_i$ is fixed, we can apply Matveev's theorem directly to either \eqref{eq:1von3} or  \eqref{eq:2von3} and obtain
\[
	\log \min\{b,c\}
	\leq \log b
	< C_{28} \log M.
\]
From now on assume that $q_j>1$ and $q_k>1$.

We consider the linear form
\[
	\Lambda_1:=\beta_k\gamma_j\Lambda_{11}-\beta_j\gamma_k\Lambda_{12}+\beta_j\beta_k\Lambda_{13}
\]
and note that $|\beta_k\gamma_j|,|\beta_j\gamma_k|,|\beta_j\beta_k|\leq M^2$.
We obtain
\begin{multline}\label{eq:f2_1_matv}
|\Lambda_1|=
|\beta_i(\beta_k\gamma_j - \beta_j\gamma_k) \log q_i\\
	- \beta_k(\gamma_j y_{j1} - \beta_j z_{j1}) \log p_{j1} - \dots
	- \beta_k(\gamma_j y_{jm} - \beta_j z_{jm}) \log p_{jm} \\
 + \beta_j(\gamma_k y_{k1} -\beta_k z_{k1}) \log p_{k1} + \dots 
 + \beta_j(\gamma_k y_{kn} -\beta_k z_{kn}) \log p_{kn} |\\
 < \frac{4M^2}{b}+\frac{4M^2}{b}+\frac{4M^2}{c}
 < \frac{12M^2}{\min\{b,c\}}.
\end{multline}

We check that $\Lambda_1 \neq 0$:
Assume that $\Lambda_1 = 0$. By Remark \ref{rem:linform_zero} this means that either all coefficients are zero, or only $\log 2$ appears and is cancelled out.
In any case, all coefficients $\beta_k(\gamma_j y_{jl} - \beta_j z_{jl})$ for $l=2\bb m$ and $\beta_j(\gamma_k y_{kl} - \beta_k z_{kl})$ for $l=2\bb n$ are zero, i.e. $\gamma_j y_{jl} = \beta_j z_{jl}$ and $\gamma_k y_{kl} = \beta_k z_{kl}$ for $l\geq 2$. By Lemma \ref{lem:3-multdep} this means that $m=n=1$, $\gamma_k y_{j1} - \beta_k z_{j1}\neq 0$ and $\gamma_k y_{k1} - \beta_k z_{k1}\neq 0 $, i.e. $a+j$ and $a+k$ are both powers of 2, which is impossible.
	
Thus $\Lambda_1 \neq 0$
and we can apply Matveev's theorem to \eqref{eq:f2_1_matv}. Noting that all logarithms are fixed and all coefficients are bounded in absolute values by $M^3$ we obtain by Remark~\ref{rem:matv}
\[
	\log \min\{b,c\} < C_{29} \log M.
\]

Since $q_j \leq \min \{b+j,c+j\}$ and $q_k \leq \min\{b+k,c+k\}$, this implies
\[
	q_j, q_k < C_{30} \log M.
\]
As in previous subsections, we can now go back to earlier inequalities, apply Matveev's theorem two more times and obtain bounds of the form $\log b < C_{31} (\log M)^2$ and $\log c < C_{32} (\log M)^3$. Finally, this leads to an absolute upper bound for $M$ and thus for $b$ and $c$.

\caselI{Case 2:} We assume that $(b+i, c+i)$ is multiplicatively dependent.
Then we can write
\begin{align*}
(a+i,b+i,c+i)&=(a+i, q_i^{\beta_i}, q_i^{\gamma_i}),\\
(a+j,b+j,c+j)&=(p_{j1}^{x_{j1}} \cdots p_{jm}^{x_{jm}}, 
	p_{j1}^{y_{j1}} \cdots p_{jm}^{y_{jm}} \cdot q_j^{\beta_j}, 
	p_{j1}^{z_{j1}} \cdots p_{jm}^{z_{jm}} \cdot q_j^{\gamma_j}),\\
	(a+k,b+k,c+k)&=(p_{k1}^{x_{k1}} \cdots p_{kn}^{x_{kn}}, 
	p_{k1}^{y_{k1}} \cdots p_{kn}^{y_{kn}} \cdot q_k^{\beta_k}, 
	p_{k1}^{z_{k1}} \cdots p_{kn}^{z_{kn}} \cdot q_k^{\gamma_k}), 
\end{align*}
where the $p$'s are fixed primes and the $x$'s are fixed positive integers. The $y$'s and $z$'s are unknown nonnegative integers, the integers $q_j,q_k,\beta_i,\beta_j,\beta_k,\gamma_j,\gamma_k$ are also unknown and positive and $q_i$ is unknown and larger than 1.
We set
\[
	M:= \max\{\beta_i,\beta_j,\beta_k, \gamma_i, \gamma_j, \gamma_k, 
		y_{j1} \bb y_{jm}, z_{j1}\bb z_{jm},
		y_{k1} \bb y_{kn}, z_{k1}\bb z_{kn}\} \geq 2.
\]
Next we can write
\begin{align*}
\eps_{ij}:= i-j = (b+i)-(b+j) &= q_i^{\beta_i} - p_{j1}^{y_{j1}} \cdots p_{jm}^{y_{jm}} \cdot q_j^{\beta_j},\\
\eps_{ij} = i-j = (c+i)-(c+j) &= q_i^{\gamma_i} - p_{j1}^{z_{j1}} \cdots p_{jm}^{z_{jm}} \cdot q_j^{\gamma_j},\\
\eps_{ik}:= i-k = (b+i)-(b+k) &= q_i^{\beta_i} - p_{k1}^{y_{k1}} \cdots p_{kn}^{y_{kn}} \cdot q_k^{\beta_k},\\
\eps_{ik} = i-k = (c+i)-(c+k) &= q_i^{\gamma_i} - p_{k1}^{z_{k1}} \cdots p_{kn}^{z_{kn}} \cdot q_k^{\gamma_k},
\end{align*}
and by Remark \ref{rem:log} we get
\begin{align}
|\Lambda_{21}|:=|\beta_i \log q_i - y_{j1}\log p_{j1} - \dots - y_{jm}\log p_{jm} - \beta_j \log q_j | < \frac{4}{b},\label{eq:1von4}\\
|\Lambda_{22}|:=|\gamma_i \log q_i - z_{j1}\log p_{j1} - \dots - z_{jm}\log p_{jm} - \gamma_j \log q_j | < \frac{4}{c},\label{eq:2von4}\\
|\Lambda_{23}|:=|\beta_i \log q_i - y_{k1}\log p_{k1} - \dots - y_{kn}\log p_{kn} - \beta_k \log q_k | < \frac{4}{b},\label{eq:3von4}\\
|\Lambda_{24}|:=|\gamma_i \log q_i - z_{k1}\log p_{k1} - \dots - z_{kn}\log p_{kn} - \gamma_k \log q_k | < \frac{4}{c}.\label{eq:4von4}
\end{align}

Before we start eliminating $\log q_j$ and $\log q_k$ we consider the case where at least one of these two logarithms is zero in the first place.

\caselII{Case 2.1:} $q_j=1$ or $q_k=1$. Without loss of generality we may assume that $q_j=1$. Then $\log q_j = 0$ and we eliminate $\log q_i$ from \eqref{eq:1von4} and \eqref{eq:2von4}, that is we consider
\[
	\Lambda'_2:=\gamma_i\Lambda_{21}-\beta_i\Lambda_{22}.
\]
We obtain
\begin{equation}\label{eq:f2_lambda2strich}
	|\Lambda'_{2}|=	
	|(\gamma_i y_{j1} - \beta_i z_{j1})\log p_{j1} + \dots + (\gamma_i y_{jm} - \beta_i z_{jm})\log p_{jm}|
	< \frac{4\gamma_i}{b}+\frac{4\beta_i}{c}
	< \frac{8M}{\min\{b,c\}}.
\end{equation}
We check that $\Lambda'_{2}\neq 0$. Assume that $\Lambda'_{2}=0$. Since $p_{j1}\bb p_{jm}$ are distinct primes, all coefficients $\gamma_i y_{jl} - \beta_i z_{jl}$, with $l=1\bb m$, must be zero, i.e. 
\[
	\gamma_i \cdot (y_{j1}\bb y_{jm})= \beta_i \cdot (z_{j1}\bb z_{jm}).
\]
Since we assume that $q_j=1$, this implies that $(b+j)^{\gamma_i} = (c+j)^{\beta_i}$ which is a contradiction to the assumption that $(a+j,b+j,c+j)$ is 3-multiplicatively dependent.

Thus $\Lambda'_{2}\neq 0$ and we may apply Matveev's theorem to \eqref{eq:f2_lambda2strich}. Noting that all logarithms are fixed and the coefficients are bounded in absolute values by $M^2$, we obtain by Remark \ref{rem:matv}
\[
	\log \min \{b,c\} < C_{33} \log M.
\]

\caselII{Case 2.2:} $q_j>1$ and $q_k>1$. We use inequalities \eqref{eq:1von4}, \eqref{eq:2von4}, \eqref{eq:3von4} and \eqref{eq:4von4} to eliminate $\log q_j$ and $\log q_k$.
We consider the two linear forms
\begin{align*}
 \Lambda_{2a}&:=\gamma_j\Lambda_{21}-\beta_j\Lambda_{22},\\
 \Lambda_{2b}&:=\gamma_k\Lambda_{23}-\beta_k\Lambda_{24},
\end{align*}
and obtain the inequalities
\begin{align}
|\Lambda_{2a}|=|(\gamma_j\beta_i -  \beta_j\gamma_i)\log q_i 
	- (\gamma_j y_{j1}- \beta_j z_{j1})\log p_{j1} - \dots 
	- (\gamma_j y_{jm} - \beta_j z_{jm})\log p_{jm}|\label{eq:zweizusammen1}\\
< \frac{4\gamma_j}{b} + \frac{4\beta_j}{c}
< \frac{8M}{\min\{b,c\}},\nonumber\\
|\Lambda_{2b}|=|(\gamma_k\beta_i - \beta_k\gamma_i)\log q_i 
	- (\gamma_k y_{k1} - \beta_k z_{k1})\log p_{k1} - \dots 
	- (\gamma_k y_{kn} - \beta_k z_{kn})\log p_{kn}|\label{eq:zweizusammen2}\\
< \frac{4\gamma_k}{b} + \frac{4\beta_k}{c}
< \frac{8M}{\min\{b,c\}}.\nonumber
\end{align}
If either $\gamma_j\beta_i - \beta_j\gamma_i =0$ or $\gamma_k\beta_i -  \beta_k\gamma_i=0$, then $\log q_i$ vanishes in one of the inequalities and we can check that the according linear form is non-zero and apply Matveev's theorem directly to either \eqref{eq:zweizusammen1} or \eqref{eq:zweizusammen2}. Then we obtain a bound of the form
\[
	\log \min\{b,c\} < C_{34} \log M.
\]
If both $\gamma_j\beta_i - \beta_j\gamma_i \neq0$ and $\gamma_k\beta_i - \beta_k\gamma_i \neq 0$, then we eliminate $\log q_i$ from \eqref{eq:zweizusammen1} and \eqref{eq:zweizusammen2}. Therefore we set
\[
	\Lambda_2
	:=(\gamma_k\beta_i - \beta_k\gamma_i )\Lambda_{2a}
	 -(\gamma_j\beta_i - \beta_j\gamma_i )\Lambda_{2b}
\]
and note that $|\gamma_k\beta_i - \beta_k\gamma_i |,|\gamma_j\beta_i - \beta_j\gamma_i |\leq M^2$. All together we obtain
\begin{multline}\label{eq:vierZusammen}
|\Lambda_{2}|=
|A_{j1} \log p_{j1} + \dots + A_{jm} \log p_{jm} 
 + A_{k1} \log p_{k1} + \dots + A_{kn} \log p_{kn}|\\
< \frac{8M \cdot M^2}{\min\{b,c\}}
	+ \frac{8M \cdot M^2}{\min\{b,c\}}
\leq \frac{16M^3}{\min\{b,c\}},
\end{multline}
where
\begin{align*}
A_{jl}&=-(\gamma_k\beta_i   - \beta_k\gamma_i )(\gamma_j y_{jl} - \beta_j z_{jl})
\quad \text{for } l=1\bb m,\\
A_{kl}&= (\gamma_j\beta_i - \beta_j\gamma_i )(\gamma_k y_{kl} - \beta_k z_{kl})
\quad \text{for } l=1\bb n.
\end{align*}
We check that $\Lambda_{2}\neq 0$. Assume that $\Lambda_{2}=0$.
By Remark \ref{rem:linform_zero} either all coefficients are zero or only $\log 2$ appears and is cancelled out. 
In any case, all coefficients $A_{jl}$ for $l=2\bb m$ and $A_{kl}$ for $l=2\bb n$ are zero, i.e. 
$\gamma_j y_{jl} = \beta_j z_{jl}$  and $\gamma_k y_{kl} = \beta_k z_{kl}$ for $l\geq 2$. By Lemma~\ref{lem:3-multdep} this means that $m=n=1$, $\gamma_k y_{j1} - \beta_k z_{j1}\neq 0$ and $\gamma_k y_{k1} - \beta_k z_{k1}\neq 0$, i.e. $a+j$ and $a+k$ are both powers of 2, which is impossible.

Therefore $\Lambda_{2}\neq 0$ and we can apply Matveev's theorem to \eqref{eq:vierZusammen}. Noting that all logarithms are fixed and $|A_{jl}|,|A_{kl}|\leq M^4$ for all $l$,
we obtain by Remark \ref{rem:matv}
\[
	\log \min\{b,c\} < C_{35} \log M.
\]

Now, as in previous cases we can go back to earlier inequalities and prove the desired result. 

\subsection{All triples are 3-multiplicatively dependent}\label{sec:case3}

For the last time, let $a>1$ be a fixed integer with $a \notin\{2,8\}$. 
Suppose that $b,c\neq a$ are distinct integers larger than 1 such that
 $(a,b,c)$, $(a+1,b+1,c+1)$ and $(a+2,b+2,c+2)$ are each 3-multiplicatively dependent. By Lemma \ref{lem:3-multdep} we can write
\begin{align*}
(a+l,b+l,c+l) &= 
	(p_{l1}^{x_{l1}} \cdots p_{ln_l}^{x_{ln_l}}, 
	 p_{l1}^{y_{l1}} \cdots p_{ln_l}^{y_{ln_l}} \cdot q_l^{\beta_l},
	 p_{l1}^{z_{l1}} \cdots p_{ln_l}^{z_{ln_l}} \cdot q_l^{\gamma_l}),
\quad \text{for } l=0,1,2,
\end{align*}
where $n_0,n_1,n_2 \geq 1$ are fixed integers, the $p$'s are fixed primes and the $x$'s are fixed positive integers. The $y$'s and $z$'s are unknown nonnegative integers, the 
$q$'s, $\beta$'s and $\gamma$'s are unknown positive integers. 
We set
\[
	M:= \max\{2,\beta_l, \gamma_l,
		y_{l1} \bb y_{ln_l}, z_{l1}\bb z_{ln_l}
		\fdg l=0,1,2\}
	\geq 2.
\]
Assume for a moment that two of the $q$'s are equal to 1, i.e. $q_i=q_j=1$ for $i\neq j \in \{0,1,2\}$. Then both $u:=b+i$ and $v:=b+j$ are $S$-units, where $S=\{p_{i1}\bb p_{in_i},p_{j1}\bb p_{jn_j}\}$ is the set of all prime divisors of $(a+i)(a+j)$. This means that $u,v$ satisfy the $S$-unit equation $u-v=i-j=\eps_{ij}\in \{\pm 1, \pm 2\}$.
But such an $S$-unit equation has only finitely many solutions and they can be determined effectively  \cite{Gyoery:1979} (see also \cite{alvaradoEtAl2020} for a practical implementation). Analogously, $c$ is effectively bounded in this case.

Therefore, we may assume that at most one of the $q$'s is equal to 1. Let $\{i,j,k\}=\{0,1,2\}$, then we may assume that
\[
	q_j,q_k>1.
\]

We consider the equations
\begin{align*}
\eps_{ji}:=j-i=(b+j)-(b+i) &= p_{j1}^{y_{j1}} \cdots p_{jn_j}^{y_{jn_j}} \cdot q_j^{\beta_j} - p_{i1}^{y_{i1}} \cdots p_{in_i}^{y_{in_i}} \cdot q_i^{\beta_i},\\
\eps_{ji}=j-i=(c+j)-(c+i) &= p_{j1}^{z_{j1}} \cdots p_{jn_j}^{z_{jn_j}} \cdot q_j^{\gamma_j} - p_{i1}^{z_{i1}} \cdots p_{in_i}^{z_{in_i}} \cdot q_i^{\gamma_i},\\
\eps_{ki}:=k-i=(b+k)-(b+i) &= p_{k1}^{y_{k1}} \cdots p_{kn_k}^{y_{kn_k}} \cdot q_k^{\beta_k} - p_{i1}^{y_{i1}} \cdots p_{in_i}^{y_{in_i}} \cdot q_i^{\beta_i},\\
\eps_{ki}=k-i=(c+k)-(c+i) &= p_{k1}^{z_{k1}} \cdots p_{kn_k}^{z_{kn_k}} \cdot q_k^{\gamma_k} - p_{i1}^{z_{i1}} \cdots p_{in_i}^{z_{in_i}} \cdot q_i^{\gamma_i}.
\end{align*}
Let us write
\begin{align*}
\Lambda_{11}&:=y_{j1} \log p_{j1} + \dots + y_{jn_j}\log p_{jn_j} + \beta_j \log q_j 
	- y_{i1} \log p_{i1} - \dots - y_{in_i}\log p_{in_i} - \beta_i \log q_i,\\
\Lambda_{12}&:=z_{j1} \log p_{j1} + \dots + z_{jn_j}\log p_{jn_j} + \gamma_j \log q_j - z_{i1} \log p_{i1} - \dots - z_{in_i}\log p_{in_i} - \gamma_i \log q_i,\\
\Lambda_{13}&:=y_{k1} \log p_{k1} + \dots + y_{kn_k}\log p_{kn_k} + \beta_k \log q_k 
	- y_{i1} \log p_{i1} - \dots - y_{in_i}\log p_{in_i} - \beta_i \log q_i,\\
\Lambda_{14}&:=z_{k1} \log p_{k1} + \dots + z_{kn_k}\log p_{kn_k} + \gamma_k \log q_k - z_{i1} \log p_{i1} - \dots - z_{in_i}\log p_{in_i} - \gamma_i \log q_i.
\end{align*}
By Remark \ref{rem:log} this implies
\begin{align*}
|\Lambda_{11}|&< \frac{4}{b},&
|\Lambda_{12}|&< \frac{4}{c},\\
|\Lambda_{13}|&< \frac{4}{b},&
|\Lambda_{14}|&< \frac{4}{c}.
\end{align*}
We eliminate $\log q_j$ by defining
\[
	\Lambda_1:=\gamma_j\Lambda_{11}-\beta_j\Lambda_{12}
\]
and get
\begin{multline}\label{eq:q1weg}
|\Lambda_1|=|(\gamma_j y_{j1} -\beta_j z_{j1}) \log p_{j1} + \dots + (\gamma_j y_{jn_j} - \beta_j z_{jn_j})\log p_{jn_j} \\
	- (\gamma_j y_{i1} - \beta_j z_{i1}) \log p_{i1} - \dots - (\gamma_j y_{in_i} - \beta_j z_{in_i}) \log p_{in_i}
	- (\gamma_j \beta_i - \beta_j \gamma_i) \log q_i|\\
< \frac{4 \gamma_j}{b} + \frac{4 \beta_j}{c}
< \frac{8M}{\min\{b,c\}}.
\end{multline} 
Analogously we eliminate $\log q_k$ by considering
\[
	\Lambda_2:=\gamma_k\Lambda_{13}-\beta_k\Lambda_{14}
\]
and get
\begin{multline}\label{eq:q2weg}
|\Lambda_2|=|(\gamma_k y_{k1} -\beta_k z_{k1}) \log p_{k1} + \dots + (\gamma_k y_{kn_k} - \beta_k z_{kn_k})\log p_{kn_k} \\
	- (\gamma_k y_{i1} - \beta_k z_{i1}) \log p_{i1} - \dots - (\gamma_k y_{in_i} - \beta_k z_{in_i}) \log p_{in_i}
	- (\gamma_k \beta_i - \beta_k \gamma_i) \log q_i|\\
< \frac{4 \gamma_k}{b} + \frac{4 \beta_k}{c}
< \frac{8M}{\min\{b,c\}}.
\end{multline}

Before we eliminate $\log q_i$, let us consider the case where the coefficient of $\log q_i$ vanishes in either \eqref{eq:q1weg} or \eqref{eq:q2weg}.

\caselI{Case 1:} $\gamma_j \beta_i - \beta_j \gamma_i=0$ or $\gamma_k \beta_i - \beta_k \gamma_i=0$. Assume without loss of generality that $\gamma_j \beta_i - \beta_j \gamma_i=0$. Then \eqref{eq:q1weg} becomes
\begin{multline}\label{eq:f3_lambda1}
|\Lambda_1|=
|(\gamma_j y_{j1} -\beta_j z_{j1}) \log p_{j1} + \dots + (\gamma_j y_{jn_j} - \beta_j z_{jn_j})\log p_{jn_j} \\
	- (\gamma_j y_{i1} - \beta_j z_{i1}) \log p_{i1} - \dots - (\gamma_j y_{in_i} - \beta_j z_{in_i}) \log p_{in_i}|
< \frac{8M}{\min\{b,c\}}.
\end{multline} 

We check that $\Lambda_1\neq 0$:
Assume that $\Lambda_1 = 0$. By Remark \ref{rem:linform_zero} this means that either all coefficients are zero, or only $\log 2$ appears and is cancelled out.
In any case, all coefficients 
$\gamma_j y_{jl} -\beta_j z_{jl}$ and $\gamma_j y_{il} - \beta_j z_{il}$ are zero for $l\geq 2$.

On the one hand, we have  $\gamma_j y_{jl} = \beta_j z_{jl}$ for $l=2\bb n_j$.
Since $q_j>1$, Lemma \ref{lem:3-multdep} implies that $n_j=1$ and $\gamma_j y_{j1} \neq \beta_j z_{j1}$. Then we must have $\log p_{j1}=\log p_{i1}=\log 2$ and 
\begin{equation}\label{eq:f3_extra}
	\gamma_j y_{j1} -\beta_j z_{j1}
	=\gamma_j y_{i1} - \beta_j z_{i1}.
\end{equation} 
In particular, note that $a+j$ is a power of 2 and that $a+j$ and $a+i$ are both even, so $|i-j|=2$.

On the other hand, we have $\gamma_j y_{il} = \beta_j z_{il}$ for $l=2\bb n_i$. By the case assumption we have $\gamma_j \beta_i = \beta_j \gamma_i$, so we get $\gamma_i y_{il} = \beta_i z_{il}$ for $l=2\bb n_i$. 

If $q_i>1$, then 
Lemma \ref{lem:3-multdep} implies that $n_i=1$ and therefore $a+i$ is a power of 2, which is a contradiction because $a+i$ and $a+j$ cannot both be powers of 2.

If $q_i=1$, then we may assume that $\beta_i=\gamma_i=1$. From the case assumption $\gamma_j \beta_i - \beta_j \gamma_i=0$ we get that $\beta_j=\gamma_j$.
Then \eqref{eq:f3_extra} becomes
\begin{equation}\label{eq:f3_extra2}
	y_{j1} - z_{j1}
	=y_{i1} - z_{i1}.
\end{equation}
Since $b+j=2^{y_{j1}}q_j^{\beta_j}$ and $c+j=2^{z_{j1}}q_j^{\gamma_j}=2^{z_{j1}}q_j^{\beta_j}$, we have $y_{j1}\neq z_{j1}$ (otherwise $(a+j,b+j,c+j)$ would be 2-multiplicatively dependent). Without loss of generality assume that $y_{j1}<z_{j1}$. In order to get a contradiction, we check the three cases $y_{j1}=0$, $y_{j1}=1$ and $y_{j1}\geq 2$:
\begin{itemize}
\item $y_{j1}=0$ and $z_{j1}\geq 1$. Then since $|i-j|=2$, the integers $b+j$ and $b+i$ must both be odd, so we have $y_{i1}=0$ as well. Then equation \eqref{eq:f3_extra2} implies $z_{j1}=z_{i1}$, which is impossible because $c+j$ and $c+i$ have difference 2 so they cannot have the same positive 2-adic valuation.
\item $y_{j1}=1$ and $z_{j1}\geq 2$. Since $|i-j|=2$, this implies $y_{i1}\geq 2$ and $z_{i1}=1$. Then $y_{j1}-z_{j1}<0$ and $y_{i1}-z_{i1}>0$, a contradiction to~\eqref{eq:f3_extra2}.
\item $y_{j1}\geq 2$ and $z_{j1}> y_{j1} \geq 2$. Then since $|i-j|=2$, we have $y_{i1}=z_{i1}=1$, so $y_{i1}-z_{i1}=0$ and $y_{j1}-z_{j1}<0$, also a contradiction to~\eqref{eq:f3_extra2}.
\end{itemize}

Thus we may finally assume that $\Lambda_1\neq 0$ and apply Matveev's theorem to \eqref{eq:f3_lambda1}. Noting that all logarithms are fixed and all coefficients are bounded in absolute values by $M^2$, we obtain by Remark~\ref{rem:matv}
\[
	\log \min \{b,c\} < C_{36} \log M.
\]

\caselI{Case 2:} $\gamma_j \beta_i - \beta_j \gamma_i \neq 0$ and $\gamma_k \beta_i - \beta_k \gamma_i \neq 0$.
We eliminate $\log q_i$ and consider
\[
	\Lambda:= (\gamma_k \beta_i - \beta_k \gamma_i)\Lambda_1-(\gamma_j \beta_i - \beta_j \gamma_i)\Lambda_2.
\]
Noting that $|\gamma_k \beta_i - \beta_k \gamma_i|,|\gamma_j \beta_i - \beta_j \gamma_i|\leq M^2$, we get the inequality
\begin{multline}\label{eq:q0auchweg}
|\Lambda|=
| A_{i1} \log p_{i1} + \dots + A_{in_i} \log p_{in_i} +
	A_{j1} \log p_{j1} + \dots + A_{jn_j} \log p_{jn_j} \\
	+ A_{k1} \log p_{k1} + \dots + A_{kn_k} \log p_{kn_k}|
< \frac{8M\cdot M^2}{\min\{b,c\}}
	+ \frac{8M\cdot M^2}{\min\{b,c\}}
\leq \frac{16M^3}{\min\{b,c\}},
\end{multline}
where
\begin{align*}
A_{il}&= -(\gamma_k \beta_i - \beta_k \gamma_i)(\gamma_j y_{il} - \beta_j z_{il})
	+ (\gamma_j \beta_i - \beta_j \gamma_i)(\gamma_k y_{il} - \beta_k z_{il})\\
	&=(\gamma_j\beta_k - \beta_j \gamma_k)(\gamma_i y_{il} - \beta_i z_{il})
	\quad \text{for } l=1\bb n_i,\\
A_{jl}&= (\gamma_k \beta_i - \beta_k \gamma_i)(\gamma_j y_{jl} -\beta_j z_{jl})	
	\quad \text{for } l=1\bb n_j,\\
A_{kl}&= -(\gamma_j \beta_i - \beta_j \gamma_i)(\gamma_k y_{kl} -\beta_k z_{kl})
	\quad \text{for } l=1\bb n_k.
\end{align*}
We check that $\Lambda\neq 0$: 
Assume that $\Lambda=0$. 
By Remark \ref{rem:linform_zero} this means that either all coefficients are zero, or only $\log 2$ appears and is cancelled out.
In any case, all coefficients $A_{il},A_{jl},A_{kl}$ are zero for $l\geq 2$. 
With the case assumption we get that in particular $\gamma_j y_{jl} -\beta_j z_{jl}=0$ for $l=2\bb n_j$ and $\gamma_k y_{kl} -\beta_k z_{kl}=0$ for $l=2\bb n_k$. Since we assumed that $q_j,q_k>1$, Lemma \ref{lem:3-multdep} implies that $n_j=n_k=1$, $\gamma_j y_{jl} -\beta_j z_{j1}\neq 0$, $\gamma_k y_{k1} -\beta_k z_{k1}\neq 0$ and $a+j, a+k$ are both powers of 2, which is impossible.

Thus $\Lambda\neq 0$ and we can apply Matveev's theorem to \eqref{eq:q0auchweg}. Noting that 
the $A$'s are bounded in absolute values by $M^4$, we obtain with Remark~\ref{rem:matv}
\[
	\log \min\{b,c\} < C_{37} \log M.
\]
Then we can go back to earlier inequalities and finish the proof as in the other cases. 

\section{Further open problems}\label{sec:openProblems}

The authors did the computations from Section \ref{sec:case3} for the case $a=3$, i.e. they searched for triples of the form $(3,b,c)$ such that $(3,b,c)$, $(4,b+1,c+1)$, $(5,b+2,c+2)$ are each 3-multiplicatively dependent. The resulting bound was
\[
	\max\{b,c\} < 10^{2.4\cdot 10^{58}}.
\]
In fact, with the notation $(a+i,b+i,c+i)=(p_i^{x_i}, p_i^{y_i}q_i^{\beta_i}, p_i^{z_i}q_i^{\gamma_i})$ for $i=0,1,2$ and $p_0=3,p_1=2,p_2=5$ the bounds $\max\{q_1,q_2,q_3\} < 2\cdot 10^{14}$ and $\max\{\beta_i, \gamma_i, y_i, z_i\fdg i=0,1,2\} < 2.7 \cdot 10^{44}$ were obtained. However, it does not seem possible to apply a reduction method (e.g. the LLL-algorithm) because the bound for  $\max\{q_1,q_2,q_3\}$ is way too large. 
Probably similar bounds and difficulties are encountered in the other cases (if some of the triples are 2-multiplicatively dependent).
Therefore, we pose the following problem.

\begin{problem}\label{probl:a3}
Find all distinct integers $b,c$ larger than 1 and different from 3, such that $(3,b,c)$, $(4,b+1,c+1)$ and $(5,b+2,c+2)$ are each multiplicatively dependent.
\end{problem}
In the range $2\leq b < c \leq 10^5$ a brute force search revealed just one such triple, namely $(3,b,c)=(3,2,8)$. It seems plausible that this might be the only solution to Problem  \ref{probl:a3}.

\medskip

Now we go back to the original question in Section \ref{sec:triples}, where $a$ is not fixed.

\begin{question}\label{quest:3cons}
Are there infinitely many triples $(a,b,c)$ of pairwise distinct integers lager than 1 with $\{2,8\}\not\subset \{a,b,c\}$ such that $(a,b,c)$, $(a+1,b+1,c+1)$ and $(a+2,b+2,c+2)$ are each multiplicatively dependent?
\end{question}

A brute force search revealed that in the range $2\leq a<b<c\leq 10^3$ there are 11 such triples. In 8 occurrences exactly one of the triples $(a,b,c),(a+1,b+1,c+1),(a+2,b+2,c+2)$ is 3-multiplicatively dependent:
\begin{align*}
(2,	4,	14),
(2,	6,	48),
(2,	14,	224),
(2,	30,	960),
(6,	30,	216),
(7,	15,	49),
(7,	49,	79),
(8,	32,	98).
\end{align*}
In the following 3 occurrences two of the triples are 3-multiplicatively dependent:
\[
(3,	6,	48),
(6,	8,	48),
(6,	18,	48).
\]
In the range of the search there were no occurrences where all triples were 2-multiplicatively dependent or where all triples were 3-multiplicatively dependent. Of course, the range was too small to allow any well-founded conjecture, but it seems possible that there are infinitely many occurrences of three consecutive multiplicatively dependent integers. However, there might be fewer if we allow only 2-multiplicatively dependent or only 3-multiplicatively dependent triples. 
From Theorem \ref{thm:abcd} we know the shape of all occurrences of two consecutive 2-multiplicatively dependent triples. Requiring that the third consecutive triple be 2-multiplicatively dependent as well leads to special types of Pillai equations,  some of which are easy to solve. However, the authors have not been able to solve the following equations in integers larger than 1 and $x\neq y$:
\begin{align}
\left( (d^x + 1)^s + 1\right) ^t - d^y &= 2,\label{eq:op1}\\
d^y - \left( (d^x - 1)^s - 1\right) ^t &=2,\label{eq:op2}\\
(d^x+1)^s - (d^y-1)^m &= 2.\label{eq:op3}
\end{align}

\begin{problem}
Solve equations \eqref{eq:op1}, \eqref{eq:op2} and \eqref{eq:op3} and thus determine the number of triples $(a,b,c)$ of pairwise distinct integers lager than 1 such that $(a,b,c)$, $(a+1,b+1,c+1)$ and $(a+2,b+2,c+2)$ are each 2-multiplicatively dependent.
\end{problem}

\medskip

If we only consider triples that are 3-multiplicatively dependent, we can go back to the following question from Section \ref{sec:triples}.

\begin{question}\label{quest:3-mult}
Are there infinitely many triples $(a,b,c)$ of pairwise distinct integers lager than 1 such that $(a,b,c)$ and $(a+1,b+1,c+1)$ are both 3-multiplicatively dependent?
\end{question}

In the range $2\leq a < b < c \leq 10^3$ a brute force search revealed 13 such triples.


\medskip
 
If any of these questions can be answered, it would be interesting to generalise the results to consecutive $n$-tuples.

\begin{question}
Are there infinitely many $n$-tuples $(a_1\bb a_n)$ of pairwise distinct integers larger than 1 such that $(a_1\bb a_n),(a_1+1 \bb a_n +1)\bb (a_1+(n-1)\bb a_n +(n-1))$ are each multiplicatively dependent? Are there infinitely many $n$-tuples such that $(a_1\bb a_n)$ and $(a_1 +1 \bb a_n+1)$ are both $n$-multiplicatively dependent?
\end{question}

\bibliographystyle{plain}
\bibliography{lit_multAbh}

\begin{thebibliography}{10}

\bibitem{alvaradoEtAl2020}
A.~Alvarado, A.~Koutsianas, B.~Malmskog, C.~Rasmussen, C.~Vincent, and M.~West.
\newblock A robust implementation for solving the $s$-unit equation and several
  applications, 2020.
\newblock \href{https://arxiv.org/abs/1903.00977}{arXiv:1903.00977}.

\bibitem{Bennett2001}
M.~A. Bennett.
\newblock On some exponential equations of {S}. {S}. {P}illai.
\newblock {\em Canad. J. Math.}, 53(5):897--922, 2001.

\bibitem{Bennett2008}
M.~A. Bennett.
\newblock Differences between perfect powers.
\newblock {\em Canad. Math. Bull.}, 51(3):337--347, 2008.

\bibitem{DubickasSha2018}
A.~Dubickas and M.~Sha.
\newblock Multiplicative dependence of the translations of algebraic numbers.
\newblock {\em Rev. Mat. Iberoam.}, 34(4):1789--1808, 2018.

\bibitem{Gyoery:1979}
K.~{Gy\H{o}ry}.
\newblock {On the number of solutions of linear equations in units of an
  algebraic number field}.
\newblock {\em {Comment. Math. Helv.}}, 54:583--600, 1979.

\bibitem{Laurent}
M.~Laurent.
\newblock Linear forms in two logarithms and interpolation determinants ii.
\newblock {\em Acta Arith.}, 133(4):325--348, 2008.

\bibitem{LeVeque1952}
W.~J. LeVeque.
\newblock On the equation {$a^x-b^y=1$}.
\newblock {\em Amer. J. Math.}, 74:325--331, 1952.

\bibitem{Matveev}
E.~M. Matveev.
\newblock An explicit lower bound for a homogeneous rational linear form in the
  logarithms of algebraic numbers. {II}.
\newblock {\em Izv. Math.}, 64(6):1217--1269, 2000.

\bibitem{Mihailescu2004}
P.~Mih\u{a}ilescu.
\newblock Primary cyclotomic units and a proof of {C}atalan's conjecture.
\newblock {\em J. f{\"u}r die Reine und Angew. Math.}, pages 167--195, 2004.

\bibitem{ShoreyTijdeman1986}
T.~N. Shorey and R.~Tijdeman.
\newblock {\em Exponential diophantine equations}.
\newblock Cambridge University Press, 1986.

\end{thebibliography}

\end{document}